\newcounter{minutes}
\newcounter{hours}
\newtheorem{lemma}{Lemma}[section]
\newtheorem{theorem}{Theorem}[section]
\newtheorem{remark}{Remark}
\newcommand{\real}{\operatorname{Re}}
\newcommand{\loga}{\operatorname{Log}}
\keywords{$k-$Bessel function; univalent, starlike and convex functions; radius of starlikeness and convexity; Mittag-Leffler expansions; Laguerre-P\'olya class of entire functions.}
\subjclass[2010]{30C45, 30C15, 33C10}
\begin{document}

\title{Radii of starlikeness and convexity of generalized $\emph{k}-$Bessel functions}

\author[E. Toklu]{Evr{\.I}m Toklu}
\address{Department of Mathematics, Faculty of Education, A\u{g}r{\i} {\.I}brah{\i}m \c{C}e\c{c}en University, 04100 A\u{g}r{\i}, Turkey} 
\email{evrimtoklu@gmail.com}

\def\thefootnote{}
\footnotetext{ \texttt{File:~\jobname .tex,
printed: \number\year-0\number\month-\number\day,
\thehours.\ifnum\theminutes<10{0}\fi\theminutes}
} \makeatletter\def\thefootnote{\@arabic\c@footnote}\makeatother

\maketitle

\begin{abstract}
The main purpose of this paper is to determine the radii of starlikeness and convexity of the generalized $\emph{k}-$Bessel functions for three different kinds of normalization by using their Hadamard factorization in such a way that the resulting functions are analytic in the unit disk of the complex plane. The characterization of entire functions from Laguerre-P\'olya class plays an crucial role in this paper. Moreover, the interlacing properties of the zeros of $\emph{k}-$Bessel function and its derivative is also useful in the proof of the main results. By making use of the Euler-Rayleigh inequalities for the real zeros of the generalized $\emph{k}-$Bessel function, we obtain some tight lower and upper bounds for the radii of starlikeness and convexity of order zero.

\end{abstract}

\section{\bf Introduction and The Main Results}
It is well known fact that special functions have an indispensable place in the solution of a wide variety of important problems. Due to the versatile properties of special functions, it is important to examine their properties in many aspects. In the recent years, there has been a vivid interest on geometric properties of some special functions from the point of view of geometric function theory. Baricz and his coauthors investigated in details the determination of the radii of starlikeness and convexity of some normalized forms of these special functions, see for example \cite{BTK}, \cite{BDOY}, \cite{BKS}, \cite{ABO}, \cite{ABY}, \cite{BOS}, \cite{BSz14}, \cite{BSz15}, \cite{BSz16}, \cite{BP} and the references therein for more details. If these studies are analysed in-depth, it can clearly be seen that  the radii of univalence, starlikeness and convexity are obtained as solutions of some transcendental equations and the obtained radii satisfy some interesting inequalities. In addition, the other main fact seen on these studies is that the positive zeros of Bessel, Struve, Lommel functions of the first kind and the Laguerre-P\'olya class of entire functions have a great impotance in these papers. It is important to mention that in recent years, there has been extensively interested on the $k-$calculus. Actually, the origins of what can be called the $k-$calculus are based on the definition introduced by Diaz and Pariguan ( see cf. \cite{DP07} ) of the $k-$gamma function and the Pochhammer $k-$symbol as generalizations of the well known functions the classical gamma function and the classical Pochhammer symbol. Since then there are many works devoted to studying generalizations of some of well known special functions. So can be found the $k-$Beta function, the $k-$Zeta function and the $k-$Wright function. Recently, Mondal and Akel in \cite{MA18} introduced and studied a generalization of the $k-$Bessel function of order $\nu.$ And also, they investigated monotonicity and log-convexity properties of the generalized $k-$Bessel function ${}_{k}W_{\nu,c}.$ 

Motivated by the above series of papers on geometric properties of special functions, in this paper our aim is to present some similar results for the normalized forms of the generalized $k-$Bessel functions. For this, three different normalizations are applied in such way that the resulting functions are analytic. By considering the Hadamard factorization of the generalized $k-$Bessel function  and combining the methods from \cite{BKS}, \cite{BDOY}, \cite{BSz14} and \cite{BTK}, we determine the radii of starlikeness and
convexity for each of the three functions.

Before starting to present our main results, we would like to draw attention to some basic concepts needed for building our main results. For $r>0$ we denote by $\mathbb{D}_r=\left\{z\in\mathbb{C}: |z|<r\right\}$ the open disk of radius $r$ centered at the origin. Let $f:\mathbb{D}_r\to\mathbb{C}$ be the function defined by
\begin{equation}
f(z)=z+\sum_{n\geq 2}a_{n}z^{n},  \label{eq0}
\end{equation}
where $r$ is less or equal than the radius of convergence of the above power series. Denote by $\mathcal{A}$ the class of allanalytic functions of the form \eqref{eq0}, that is, normalized by the conditions $f(0)=f^{\prime}(0)-1=0.$ We say that the function $f,$ defined by \eqref{eq0}, is starlike function in $\mathbb{D}_r$ if $f$ is univalent in $\mathbb{D}_r$, and the image domain $f(\mathbb{D}_r)$ is a starlike domain in $\mathbb{C}$ with respect to the origin (see \cite{Dur} for more details). Analytically, the function $f$ is starlike in $\mathbb{D}_r$ if and only if $$\real\left( \frac{zf^{\prime }(z)}{f(z)}\right) >0 \quad \mbox{for all}\ \ z\in\mathbb{D}_r.$$ For $\alpha \in [0,1)$ we say that the function $f$ is starlike of order $\alpha $ in $\mathbb{D}_r$ if and only if 
$$\real\left( \frac{zf^{\prime }(z)}{f(z)}\right) >\alpha \quad \mbox{for all}\ \ z\in\mathbb{D}_r.$$
The radius of starlikeness of order $\alpha$ of the function $f$ is defined as the real number
\begin{equation*}
r_{\alpha }^{\star}(f)=\sup\left\{r>0\left|\real\left(\frac{zf^{\prime }(z)}{f(z)}\right)  >\alpha \;\text{for all }z\in	\mathbb{D}_r\right.\right\}.
\end{equation*}
Note that $r^{\star}(f)=r_{0}^{\star}(f)$ is in fact the largest radius such that the image region $f(\mathbb{D}_{r^{\star}(f)})$ is a starlike domain with respect to the origin.
The function $f,$ defined by \eqref{eq0}, is convex in the disk $\mathbb{D}_r$ if $f$ is univalent in $\mathbb{D}_r$, and the image domain $f(\mathbb{D}_r)$ is a convex domain in $\mathbb{C}.$ Analytically, the function $f$ is convex in $\mathbb{D}_r$ if and only if
$$\real\left(  1+\frac{zf^{\prime \prime }(z)}{f^{\prime }(z)}\right)>0  \quad \mbox{for all}\ \ z\in\mathbb{D}_r.$$
For $\alpha \in[0,1)$ we say that the function $f$ is convex of order $\alpha $ in $\mathbb{D}_r$ if and only if 
$$\real\left( 1+\frac{zf^{\prime \prime }(z)}{f^{\prime }(z)}\right)
>\alpha \quad \mbox{for all}\ \ z\in\mathbb{D}_r.$$ 
We shall denote the radius of convexity of order $\alpha $ of the function $f$ by the real number
\begin{equation*}
r_{\alpha }^{c}(f)=\sup \left\{ r>0 \left|\real\left( 1+
\frac{zf^{\prime \prime }(z)}{f^{\prime }(z)}\right) >\alpha \;\text{for all }z\in\mathbb{D}_r\right.\right\} .
\end{equation*}
Note that $r^{c}(f)=r_{0}^{c}(f)$ is the largest radius such that the image region $f(\mathbb{D}_{r^{c}(f)})$ is a convex domain.

We recall that a real entire function $q$ belongs to the  Laguerre-P\'{o}lya class $\mathcal{LP}$ if it can be represented in the form $$q(x)=cx^{m}e^{-ax^2+bx}\prod_{n\geq1}\left(1+\frac{x}{x_n}\right)e^{-\frac{x}{x_n}},$$ with $c,b,x_n\in\mathbb{R}, a\geq0, m\in\mathbb{N}_0$ and $\sum\frac{1}{{x_n}^2}<\infty.$ We note that the class $\mathcal{LP}$ is the complement of the space of polynomials whose zeros are all real in the topology induced by the uniform convergence on the compact sets of the complex plane of polynomials with only real zeros. For more details on the class $\mathcal{LP}$ we refer to \cite[p. 703]{DC} and to the references therein.

\subsection{Generalized $k-$Bessel function} \label{Section2}
In this section we shall focus on a generalization of the $k-$Bessel function of order $\nu$ defined by the series
\begin{equation}\label{kBessel1}
{}_{k}W_{\nu,c}(z)=\sum_{n=0}^{\infty}\frac{(-c)^n}{n!\Gamma_{k}(nk+\nu+k)}\left( \frac{z}{2}\right)^{2n+\frac{\nu}{k}},
\end{equation}
where $k>0,$ $\nu>-1,$ $c\in\mathbb{R}$ and  $\Gamma_{k}$ stands for the $k-$gamma functions studied in \cite{DP07} and defined by
\[\Gamma_{k}(z)=\int_{0}^{\infty}t^{z-1}e^{-\frac{t^{k}}{k}} dt, \]
for $\real(z)>0.$ For several intriguing properties of  $k-$Bessel functions one can consult on \cite{MA18}. Moreover, several properties of the $k-$gamma functions in generalizing other related functions like $k-$beta and $k-$digamma functions can be found in \cite{DP07}, \cite{MNR13}, \cite{NP14} and references therein. It is important to mention that for a complex number $z$ and a positive real number $k,$ the $k-$gamma function and the classical Gamma function have the relation
\begin{equation}\label{RelationG-kG}
\Gamma_{k}(z)=k^{\frac{z}{k}-1}\Gamma\left( \frac{z}{k}\right). 
\end{equation}
It is important to note that for a positive real number $k,$ the $k-$gamma function satisfies the following properties
\begin{align}
\Gamma_{k}(z+k)&=z\Gamma_{k}(z), \text{ \ \ }\Gamma_{k}(k)=1 \label{RelationG-KG2} \text{ \ and \ }\\
\frac{1}{\Gamma_{k}(z)}&=zk^{-\frac{z}{k}}e^{\frac{z}{k}\gamma}\prod_{n\geq 1}\left(1+\frac{z}{nk}\right)e^{-\frac{z}{nk}},  \label{RelationG-KG3}
\end{align}
where $\gamma$ is Euler's constant.

Observe that as $k\rightarrow 1,$ the $k-$Bessel function ${}_{1}W_{\nu,1}$ is reduced to the classical Bessel function $J_{v}$, whereas ${}_{1}W_{\nu,-1}$ coincides with the modified Besel function $I_{\nu}.$

It is easy to check that the function $z\mapsto{}_{k}W_{\nu,c}$ does not belong to the class $\mathcal{A}.$ Thus first we shall perform some natural normalization. We define three functions originating from ${}_{k}W_{\nu,c}(.):$
\begin{align*}
{}_{k}f_{\nu,c}(z)=\left(2^{\frac{\nu}{k}}\Gamma_{k}(\nu+k){}_{k}W_{\nu,c}(z)\right)^{\frac{k}{\nu}},  \\
{}_{k}g_{\nu,c}(z)=2^{\frac{\nu}{k}}\Gamma_{k}(\nu+k)z^{1-\frac{\nu}{k}} {}_{k}W_{\nu,c}(z),\\
{}_{k}h_{\nu,c}(z)=2^{\frac{\nu}{k}}\Gamma_{k}(\nu+k)z^{1-\frac{\nu}{2k}}{}_{k}W_{\nu,c}(\sqrt{z}).
\end{align*}
It is obvious that each of these functions are of the class $\mathcal{A}.$ Of course, it can be written infinitely many other normalization; the main motivation to consider the above ones is the studied normalization in the literature of Bessel, Struve, Lommel and Wright functions. Moreover, it is convenient to mention here that in fact 
\[{}_{k}f_{\nu,c}(z)=\exp\left[\frac{k}{\nu}\loga(2^{\frac{\nu}{k}}\Gamma_{k}(\nu+k){}_{k}W_{\nu,c}(z))\right], \]
where $\loga$ represents the principle branch of the logarithm function and every many-valued function considered in this paper are taken with the principal branch.

The following lemma,  which we believe is of independent interest, plays a crucial role in proving our main results which are related to radii of starlikeness and convexity of functions ${}_{k}f_{\nu,c}$, ${}_{k}g_{\nu,c}$, and ${}_{k}h_{\nu,c}.$
\begin{lemma} \label{kBesselLemma}
Let $k>0$, $c>0$ and $\nu>0$. Then the function $z \mapsto {}_{k}W_{\nu,c}(z) $ has infinitely many zeros which are all real. Denoting by ${}_{k}\omega_{\nu,c,n}$ the $n$th positive zero of ${}_{k}W_{\nu,c}(z)$, under the same conditions the Weierstrassian decomposition
\begin{equation}\label{InfiniteProductkBessel}
{}_{k}W_{\nu,c}(z)=\frac{\left( \frac{z}{2}\right) ^{\frac{\nu}{k}}}{\Gamma_{k}(\nu+k)}\prod_{n\geq 1}\left(1-\frac{z^2}{{}_{k}\omega_{\nu,c,n}^2}\right) 
\end{equation}
is fulfilled, and this product is uniformly convergent on compact subsets of the complex plane. Moreover, if we denote by ${}_{k}\omega_{\nu,c,n}^{\prime}$ the nth positive zero of ${}_{k}W_{\nu,c}^{\prime}(z)$, then positive zeros of ${}_{k}W_{\nu,c}$ are interlaced with those of ${}_{k}W_{\nu,c}^{\prime}.$ In other words, the zeros satisfy the chain of inequalities	$${}_{k}\omega_{\nu,c,1}^{\prime}<{}_{k}\omega_{\nu,c,1}<{}_{k}\omega_{\nu ,c,2}^{\prime}<{}_{k}\omega_{\nu,c,2}<\dots_{.}$$
\end{lemma}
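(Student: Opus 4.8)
The plan is to reduce the entire statement to the classical theory of the Bessel function $J_{\mu}$ of order $\mu=\nu/k$ by means of the relation \eqref{RelationG-kG}. First I would use $\Gamma_k(z)=k^{z/k-1}\Gamma(z/k)$ to rewrite the denominators in \eqref{kBessel1}: taking $z=nk+\nu+k$ gives $\Gamma_k(nk+\nu+k)=k^{\,n+\nu/k}\Gamma\!\left(n+\tfrac{\nu}{k}+1\right)$. Substituting this into the series and collecting powers, one finds that ${}_{k}W_{\nu,c}(z)=\lambda\,J_{\mu}(\beta z)$ with $\mu=\nu/k$, $\beta=\sqrt{c/k}$ and $\lambda=k^{-\mu}(c/k)^{-\mu/2}$, where the hypotheses $c>0$, $k>0$ guarantee that $\beta>0$ is real and $\lambda\neq0$. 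This closed form transfers every assertion about ${}_{k}W_{\nu,c}$ to a corresponding statement about $J_\mu$ under the real dilation $z\mapsto\beta z$.

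With this identification the reality and the count of the zeros are immediate: for $\mu>-1$ (here $\mu=\nu/k>0$) the classical Bessel function $J_\mu$ has only real zeros and possesses infinitely many positive ones $j_{\mu,n}$. Consequently ${}_{k}W_{\nu,c}$ has only real zeros, located at $\pm j_{\mu,n}/\beta$, and its positive zeros are ${}_{k}\omega_{\nu,c,n}=j_{\mu,n}/\beta$. I would emphasise here that the hypothesis $c>0$ is essential: for $c<0$ the factor $\beta$ becomes purely imaginary and $J_\mu(\beta z)$ turns into a modified Bessel function with no real zeros.

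For the Weierstrassian product I would transplant the classical factorisation $J_\mu(z)=\frac{(z/2)^\mu}{\Gamma(\mu+1)}\prod_{n\ge1}\bigl(1-z^2/j_{\mu,n}^2\bigr)$, which converges uniformly on compact sets because $j_{\mu,n}\sim n\pi$. Replacing $z$ by $\beta z$ and multiplying by $\lambda$ produces a product over the factors $\bigl(1-\beta^2z^2/j_{\mu,n}^2\bigr)=\bigl(1-z^2/{}_{k}\omega_{\nu,c,n}^2\bigr)$, which is exactly the product in \eqref{InfiniteProductkBessel}; it then only remains to check the prefactor. Using $\Gamma_k(\nu+k)=k^{\mu}\Gamma(\mu+1)$ from \eqref{RelationG-kG} together with the identity $\lambda\beta^\mu=k^{-\mu}$, the leading coefficient $\lambda(\beta z/2)^\mu/\Gamma(\mu+1)$ collapses to $(z/2)^{\nu/k}/\Gamma_k(\nu+k)$, as required.

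Finally, the interlacing follows from the corresponding classical property of the Bessel function. Since ${}_{k}W_{\nu,c}'(z)=\lambda\beta\,J_\mu'(\beta z)$, the positive zeros of ${}_{k}W_{\nu,c}'$ are ${}_{k}\omega_{\nu,c,n}'=j_{\mu,n}'/\beta$, where $j_{\mu,n}'$ denote the positive zeros of $J_\mu'$. The chain $j_{\mu,1}'<j_{\mu,1}<j_{\mu,2}'<\cdots$ is classical; one may invoke the closure of the Laguerre–P\'olya class under differentiation applied to the even entire function $z\mapsto(z/2)^{-\mu}J_\mu(z)$, or Watson's treatment via the Bessel equation. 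Because the positive dilation $z\mapsto\beta z$ preserves order, the stated interlacing is inherited verbatim. I expect the only genuinely delicate point to be the reality of all the zeros, that is, the exclusion of complex zeros; this is precisely what the reduction to $J_\mu$ (equivalently, Laguerre–P\'olya membership of the associated even function) delivers, after which the product representation and the interlacing are essentially formal consequences.
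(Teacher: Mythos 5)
Your proposal is correct, and the closed form it rests on checks out: with $\mu=\nu/k$, the relation \eqref{RelationG-kG} gives $\Gamma_k(nk+\nu+k)=k^{n+\mu}\Gamma(n+\mu+1)$, whence ${}_{k}W_{\nu,c}(z)=k^{-\mu}(c/k)^{-\mu/2}J_{\mu}\bigl(\sqrt{c/k}\,z\bigr)$, and the prefactor bookkeeping $\lambda\beta^{\mu}=k^{-\mu}$, $\Gamma_k(\nu+k)=k^{\mu}\Gamma(\mu+1)$ makes the transplanted Hadamard product land exactly on \eqref{InfiniteProductkBessel}. This is, however, a genuinely different route from the paper's. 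The paper argues intrinsically: it places $w\mapsto 1/\Gamma_k(wk+\nu+k)$ in the Laguerre--P\'olya class via \eqref{RelationG-KG3} and invokes Runckel's theorem to get reality of the zeros, computes the growth order $\rho=\tfrac12$ of the series to get infinitely many zeros and genus zero (hence the Weierstrass product), and derives the interlacing by showing that $\tfrac{d}{dz}\bigl({}_{k}W_{\nu,c}'/{}_{k}W_{\nu,c}\bigr)<0$ on the real axis, so the logarithmic derivative decreases from $+\infty$ to $-\infty$ between consecutive zeros. Your reduction buys brevity and rigor at low cost --- every claim becomes a classical fact about $J_{\mu}$ (Hurwitz's reality theorem for $\mu>-1$, Watson's product, and the interlacing $j_{\mu,1}'<j_{\mu,1}<j_{\mu,2}'<\cdots$, for which you do need $\mu>0$, supplied here by $\nu>0$, $k>0$) transported by a positive dilation; it also sidesteps the paper's somewhat delicate growth-order computation. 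What the paper's approach buys is independence from the existence of such a closed-form reduction: its Laguerre--P\'olya/Euler--Rayleigh machinery is the part that is reused in the later theorems and would survive for generalizations of ${}_{k}W_{\nu,c}$ that are not dilations of $J_{\mu}$. One point worth making explicit if you write this up: the factor $z^{\nu/k}$ is multivalued for non-integer $\nu/k$, so ``zeros'' should be read for the associated even entire function $z\mapsto z^{-\nu/k}\,{}_{k}W_{\nu,c}(z)$ (the paper glosses over this in the same way).
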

\begin{proof}
Let us start to prove by showing the reality of zeros of the generalized $k-$Bessel function ${}_{k}W_{\nu,c}(z).$ For fulfilling this objective, consider the entire function
\[{}_{k}W_{\nu,c}(z)=\left( \frac{z}{2}\right)^{\frac{\nu}{k}} \sum_{n=0}^{\infty}\frac{(-c)^n}{n!\Gamma_{k}(nk+\nu+k)}\left( \frac{z}{2}\right)^{2n}.\]
The function ${}_{k}G_{\nu,c}:\left[0,\infty \right) \rightarrow \mathbb{R} $ defined by
\[{}_{k}G_{\nu,c}(z)= \frac{1}{\Gamma_{k}(nk+\nu+k)}\]
is entire function and of growth order $1$ (see Eqn. \eqref{RelationG-KG3}), belongs to $\mathcal{LP}.$ Moreover, if we choose $f(z)=e^{-c\left(\frac{z}{2}\right)^{2} },$ which has no zeros at all, then with the aid of the Runckel's theorem stated in \cite[Lemma 4, p.p 2209]{BS18} we say that the  generalized $k-$Bessel function ${}_{k}W_{\nu,c}(z)$ has real zeros only if  $k>0$, $c>0$ and $\nu>0$. Furthermore, taking into account that
$$ c_n=\frac{(-c)^n}{n!\Gamma_{k}(nk+\nu+k)} \text{ \ and \ } \Gamma(n+1)=n!,$$ 
the growth order of the generalized $k-$Bessel function is calculated as
\begin{align*}
\rho({}_{k}W_{\nu,c})&=\limsup_{n\rightarrow \infty}\frac{n\log n}{-\log\left| c_n\right|}=\limsup_{n\rightarrow \infty}\frac{n\log n}{-\log\left|\frac{(-c)^n}{n!\Gamma_{k}(nk+\nu+k)} \right|}  \\
&=\limsup_{n\rightarrow \infty} \frac{n\log n}{-n\log c+\log \Gamma(n+1)+\Gamma_{k}(nk+\nu+k)} \quad (\text{ \ by Eqn. \eqref{RelationG-kG}})\\
&=\frac{1}{2}.
\end{align*}
It is well known that the finite growth order $\rho$ of an entire function is not equal to a positive integer, then  the function has infinitely many zeros. That is to say, $k-$Bessel function ${}_{k}W_{\nu,c}(z)$ given in \eqref{kBessel1} has infinitely zeros. As a result of these explanations, we deduce that the zeros of the $k-$Bessel function ${}_{k}W_{\nu,c}(z)$ are all real. In this case, by means of the Hadamard theorem on growth order of the entire function, it follows that its infinite product representation is exactly what we have in \Cref{kBesselLemma}. Fnally, because of the fact that the growth order $\rho({}_{k}W_{\nu,c})=\frac{1}{2}$ is not an integer, we conclude that the {\it genus} of the $k-$Bessel function ${}_{k}W_{\nu,c}(z)$ is equal to zero which is the integer part of $\rho({}_{k}W_{\nu,c}).$ Then the zeros of ${}_{k}W_{\nu,c}'(z)$ are all real also and are separated from each other by those of ${}_{k}W_{\nu,c}(z).$  More precisely, taking into account the infinite product representation we get
\begin{equation}\label{Realzeros1}
\frac{{}_{k}W_{\nu,c,}'(z)}{_{k}W_{\nu,c}(z)}=\frac{\nu}{kz}-\sum_{n \geq 1}\frac{2z}{{}_{k}\omega_{\nu,c,n}^2 -z^2}.
\end{equation}
Diferentiating both sides of \eqref{Realzeros1} we arrive at
$$\frac{d}{dz}\left(\frac{_{k}W_{\nu,c}'(z)}{_{k}W_{\nu,c}(z)}\right)=-\frac{\nu}{kz^2}-2\sum_{n\geq1}\frac{{}_{k}\omega_{\nu,c,n}^2+z^2}{\left({}_{k}\omega_{\nu,c,n}^2-z^2\right) ^2}.$$
Since the expression on the right-hand side is real and negative for $z$ real, the quotient $\frac{_{k}W_{\nu,c}'(z)}{_{k}W_{\nu,c}(z)}$ is a strictly decreasing function from $+\infty$ to $-\infty$ as $z$ increases through real values over the open interval $({}_{k}\omega_{\nu,c,n}, {}_{k}\omega_{\nu,c,n+1}) \text{ \ \ } n\in \mathbb{N}.$ That is to say that the function $_{k}W_{\nu,c}'(z)$ vanishes just once between two consecutive zeros of the function $_{k}W_{\nu,c}(z).$ In other words, the zeros satisfy the chain of inequalities	$${}_{k}\omega_{\nu,c,1}^{\prime}<{}_{k}\omega_{\nu,c,1}<{}_{k}\omega_{\nu ,c,2}^{\prime}<{}_{k}\omega_{\nu,c,2}<\dots,$$
where ${}_{k}\omega_{\nu,c,n}$ and ${}_{k}\omega_{\nu,c,n}^{\prime}$ are, respectively, the $n$th positive zeros of ${}_{k}W_{\nu,c}(z)$ and ${}_{k}W_{\nu,c}'(z).$

The proof of the Lemma is completed.
\end{proof}

\subsection{The radii of  starlikeness of order $\alpha$ of functions ${}_{k}f_{\nu,c}$, ${}_{k}g_{\nu,c}$, and ${}_{k}h_{\nu,c}$} This section is devoted to investigate the radii of starlikeness of order $\alpha$ of the normalized forms of the  $k-$Bessel functions  ${}_{k}W_{\nu,c}(z),$ that is of ${}_{k}f_{\nu,c}$, ${}_{k}g_{\nu,c}$, and ${}_{k}h_{\nu,c}$. In addition, in this section we aim to find some tight lower and upper bounds for the radii of starlikeness and convexity of order zero.
\begin{theorem}\label{MainTheorem1}
Let $k>0$, $c>0,$ $\nu>0$ and $\alpha \in \left[0,1 \right) $. Then the following assertions are true.
\begin{itemize}
	\item [\bf a.] The radius of starlikeness of order $\alpha$ of the function ${}_{k}f_{\nu,c}$ is the smallest positive root of the equation
	\[kr{}_{k}W_{\nu,c}'(r)-\nu \alpha {}_{k}W_{\nu,c}(r)=0.\]
	\item [\bf b.] The radius of starlikeness of order $\alpha$ of the function ${}_{k}g_{\nu,c}$ is the smallest positive root of the equation
	\[ r{}_{k}W_{\nu,c}'(r)-\left( \alpha+\frac{\nu}{k}-1\right) {}_{k}W_{\nu,c}(r)=0 .\]
	\item [\bf c.] The radius of starlikeness of order $\alpha$ of the function ${}_{k}h_{\nu,c}$ is the smallest positive root of the equation
	\[\sqrt{r}{}_{k}W_{\nu,c}'(\sqrt{r})-2\left(\alpha+\frac{\nu}{2k}-1\right) {}_{k}W_{\nu,c}(\sqrt{r})=0 .\]
\end{itemize}
\end{theorem}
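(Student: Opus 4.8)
The plan is to prove all three assertions in parallel, reducing each to a one--dimensional problem on the positive real axis by means of the Weierstrassian decomposition \eqref{InfiniteProductkBessel} together with the interlacing information in \Cref{kBesselLemma}. The first step is logarithmic differentiation. For ${}_{k}f_{\nu,c}$ I would use the representation ${}_{k}f_{\nu,c}=\exp\left[\tfrac{k}{\nu}\loga\!\left(2^{\nu/k}\Gamma_{k}(\nu+k){}_{k}W_{\nu,c}\right)\right]$ and \eqref{Realzeros1}; for ${}_{k}g_{\nu,c}$ the identity $\log {}_{k}g_{\nu,c}=\mathrm{const}+(1-\tfrac{\nu}{k})\log z+\log {}_{k}W_{\nu,c}$; and for ${}_{k}h_{\nu,c}$ the observation that substituting the product for ${}_{k}W_{\nu,c}(\sqrt z)$ collapses the normalization to ${}_{k}h_{\nu,c}(z)=z\prod_{n\ge1}\bigl(1-z/{}_{k}\omega_{\nu,c,n}^{2}\bigr)$. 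This yields, for $|z|<{}_{k}\omega_{\nu,c,1}$,
\begin{align*}
\frac{z\,{}_{k}f_{\nu,c}'(z)}{{}_{k}f_{\nu,c}(z)}&=1-\frac{k}{\nu}\sum_{n\ge1}\frac{2z^{2}}{{}_{k}\omega_{\nu,c,n}^{2}-z^{2}},\\
\frac{z\,{}_{k}g_{\nu,c}'(z)}{{}_{k}g_{\nu,c}(z)}&=1-\sum_{n\ge1}\frac{2z^{2}}{{}_{k}\omega_{\nu,c,n}^{2}-z^{2}},\\
\frac{z\,{}_{k}h_{\nu,c}'(z)}{{}_{k}h_{\nu,c}(z)}&=1-\sum_{n\ge1}\frac{z}{{}_{k}\omega_{\nu,c,n}^{2}-z}.
\end{align*}

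The second step is a minimum principle on circles. The elementary inequality $\real\!\left(\tfrac{w}{a-w}\right)\le \tfrac{|w|}{a-|w|}$, valid whenever $a>|w|>0$ and proved by maximizing the real part over the argument of $w$, is applied with $w=z^{2}$ for the first two functions and with $w=z$ for the third. It shows that each summand above has real part bounded above by its value at the real point $z=r=|z|$; since the multiplying constants $k/\nu$ and $1$ are positive, I obtain that for every $r\in(0,{}_{k}\omega_{\nu,c,1})$ the real part of $z\varphi'(z)/\varphi(z)$ on the circle $|z|=r$ attains its minimum at $z=r$, where $\varphi$ denotes any of ${}_{k}f_{\nu,c},{}_{k}g_{\nu,c},{}_{k}h_{\nu,c}$. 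Consequently the starlikeness condition $\real\bigl(z\varphi'(z)/\varphi(z)\bigr)>\alpha$ holds throughout $\mathbb{D}_{r}$ exactly when $r\varphi'(r)/\varphi(r)>\alpha$.

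The third step is to analyze $\Psi(r):=r\varphi'(r)/\varphi(r)$ on $(0,{}_{k}\omega_{\nu,c,1})$. Each series above consists of terms strictly increasing in $r$, so $\Psi$ is strictly decreasing; moreover $\Psi(0^{+})=1>\alpha$, while $\Psi(r)\to-\infty$ as $r\uparrow{}_{k}\omega_{\nu,c,1}$ because the first summand blows up. Hence $\Psi(r)=\alpha$ has a unique root in $(0,{}_{k}\omega_{\nu,c,1})$, and by the minimum principle this root is precisely the radius of starlikeness of order $\alpha$. Clearing the positive factor ${}_{k}W_{\nu,c}(r)$ (respectively ${}_{k}W_{\nu,c}(\sqrt r)$) in $\Psi(r)=\alpha$ and invoking \eqref{Realzeros1} then converts the three conditions into the stated transcendental equations, and the root found is their smallest positive root since it lies below ${}_{k}\omega_{\nu,c,1}$. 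I expect the only delicate point to be the minimum principle of the second step; once it is reduced via $w=z^{2}$ or $w=z$ to the elementary estimate above, the remainder is bookkeeping, with the proviso that the restriction $|z|<{}_{k}\omega_{\nu,c,1}$ must be maintained throughout, both for convergence of the series and for the single-valued analytic branch defining ${}_{k}f_{\nu,c}$.
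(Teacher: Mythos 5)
Your proposal is correct and follows essentially the same route as the paper: the Weierstrassian factorization from \Cref{kBesselLemma}, logarithmic differentiation to obtain the three series for $z\varphi'(z)/\varphi(z)$, the elementary estimate \eqref{Section1Eq.2} applied termwise to show the real part is minimized at $z=r$ on each circle, and then monotonicity of $r\mapsto r\varphi'(r)/\varphi(r)$ to identify the radius with the smallest positive root of the stated equation. Your explicit verification that this quotient decreases from $1$ to $-\infty$ is a slight elaboration of what the paper leaves implicit, but the argument is the same.
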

\begin{proof}
In order to verify assertions of the theorem we need to show that the inequalities
\begin{equation}\label{Section1Eq.1}
\real\left(\frac{z{}_{k}f_{\nu,c}'(z)}{_{k}f_{\nu,c}(z)}\right)>\alpha, \quad \real\left(\frac{z{}_{k}g_{\nu,c}'(z)}{_{k}g_{\nu,c}(z)}\right)>\alpha \text{ \ and \ } \real\left( \frac{z{}_{k}h_{\nu,c}'(z)}{_{k}h_{\nu,c}(z)}\right)>\alpha,
\end{equation}
are valid for $z\in \mathbb{D}_{{}_{k}x_{\nu,c,1}}(_{k}f_{\nu,c}),$  $z\in \mathbb{D}_{{}_{k}y_{\nu,c,1}}(_{k}g_{\nu,c})$ and  $z\in \mathbb{D}_{{}_{k}z_{\nu,c,1}}(_{k}h_{\nu,c}),$ respectively, and each of the above-mentioned inequalities does not hold in any larger disk. It is important to  that under the corresponding conditions the zeros of the $k-$Besel function ${}_{k}W_{\nu,c}(z)$ are all real. As a result of this reminding and in light of the \Cref{kBesselLemma}, the $k-$Bessel function can be represented by the Weierstrassian decomposition of the form
\[ {}_{k}W_{\nu,c}(z)=\frac{\left( \frac{z}{2}\right) ^{\frac{\nu}{k}}}{\Gamma_{k}(\nu+k)}\prod_{n\geq 1}\left(1-\frac{z^2}{{}_{k}\omega_{\nu,c,n}^2}\right) \]
and this infinite product is uniformly convergent on each compact subset of $\mathbb{C}.$ Now, consider the functions
\begin{align*}
{}_{k}f_{\nu,c}(z)=\left(2^{\frac{\nu}{k}}\Gamma_{k}(\nu+k){}_{k}W_{\nu,c}(z)\right)^{\frac{k}{\nu}},  \\
{}_{k}g_{\nu,c}(z)=2^{\frac{\nu}{k}}\Gamma_{k}(\nu+k)z^{1-\frac{\nu}{k}} {}_{k}W_{\nu,c}(z),\\
{}_{k}h_{\nu,c}(z)=2^{\frac{\nu}{k}}\Gamma_{k}(\nu+k)z^{1-\frac{\nu}{2k}}{}_{k}W_{\nu,c}(\sqrt{z}).
\end{align*}
Logarithmic differentiation of both sides of each of the above functions implies in turn
\begin{align*}
\frac{z{}_{k}f_{\nu,c}'(z)}{{}_{k}f_{\nu,c}(z)}=& \frac{k}{\nu}\left(\frac{z{}_{k}W_{\nu,c}'(z)}{{}_{k}W_{\nu,c}(z)} \right)=1-\frac{k}{\nu}\sum_{n \geq 1}\frac{2z^2}{{}_{k}\omega_{\nu,c,n}^2-z^2},  \\
\frac{z{}_{k}g_{\nu,c}'(z)}{{}_{k}g_{\nu,c}(z)}=& 1-\frac{\nu}{k}+\left(\frac{z{}_{k}W_{\nu,c}'(z)}{{}_{k}W_{\nu,c}(z)} \right)=1-\sum_{n\geq1}\frac{2z^2}{{}_{k}\omega_{\nu,c,n}^2-z^2}, \\
\frac{z{}_{k}h_{\nu,c}'(z)}{{}_{k}h_{\nu,c}(z)}=&1-\frac{\nu}{2k}+\frac{1}{2}\left(\frac{\sqrt{z}{}_{k}W_{\nu,c}'(\sqrt{z})}{{}_{k}W_{\nu,c}(\sqrt{z})} \right)=1-\sum_{n \geq 1}\frac{z}{{}_{k}\omega_{\nu,c,n}^2-z}.
\end{align*}
From \cite{BKS} we know that if $z\in \mathbb{C}$ and $\theta \in \mathbb{R}$ are such that $\theta >\left| z\right|,$ then 
\begin{equation}\label{Section1Eq.2}
\frac{\left| z\right|}{\theta-\left| z\right|}\geq \real\left( \frac{z}{\theta-z}\right).
\end{equation}
By using Eqn. \eqref{Section1Eq.2}, for  $k>0,$ $c>0,$ $\nu>0,$ $n\in\mathbb{N}$ and $\left| z\right|<{}_{k}\omega_{\nu,c,1}$ we get that
\begin{align*}
\real\left(\frac{z{}_{k}f_{\nu,c}'(z)}{_{k}f_{\nu,c}(z)}\right)&=1-\frac{k}{\nu}\real\left(\sum_{n \geq 1}\frac{2z^2}{{}_{k}\omega_{\nu,c,n}^2-z^2} \right) \\
&\geq 1-\frac{k}{\nu}\sum_{n \geq 1}\frac{2\left| z\right| ^2}{{}_{k}\omega_{\nu,c,n}^2-\left| z\right| ^2} = \frac{\left| z\right| {}_{k}f_{\nu,c}'(\left| z\right| )}{_{k}f_{\nu,c}(\left| z\right| )},\\
\real\left(\frac{z{}_{k}g_{\nu,c}'(z)}{{}_{k}g_{\nu,c}(z)}\right)&=1-\real\left(\sum_{n \geq 1}\frac{2z^2}{{}_{k}\omega_{\nu,c,n}^2-z^2} \right)\\
&\geq 1- \sum_{n \geq 1}\frac{2\left| z\right| ^2}{{}_{k}\omega_{\nu,c,n}^2-\left| z\right| ^2}= \frac{\left| z\right| {}_{k}g_{\nu,c}'(\left| z\right| )}{{}_{k}g_{\nu,c}(\left| z\right|)},\\
\real\left(\frac{z{}_{k}h_{\nu,c}'(z)}{_{k}h_{\nu,c}(z)}\right)&=1-\real\left(\sum_{n\geq1}\frac{z}{{}_{k}\omega_{\nu,c,n}^2-z}\right)\\
&\geq1-\sum_{n\geq1}\frac{\left|z\right|}{{}_{k}\omega_{\nu,c,n}^2-\left| z\right|}=\frac{\left|z\right|{}_{k}h_{\nu,c}'(\left| z\right| )}{{}_{k}h_{\nu,c}(\left| z\right| )}.
\end{align*}
It is important to mention that equalities in the above-mentioned inequalities are attained only when $z=\left| z\right|=r.$ In light of the previous inequalities and the minimum principle for harmonic functions we deduce that the inequalities stated in \eqref{Section1Eq.1} hold if and only if $\left|z\right|<{}_{k}x_{\nu,c,1}, $ $\left|z\right|<{}_{k}y_{\nu,c,1} $ and $\left|z\right|<{}_{k}z_{\nu,c,1} $, respectively, where ${}_{k}x_{\nu,c,1},$ ${}_{k}y_{\nu,c,1}$ and ${}_{k}z_{\nu,c,1}$ are the smallest positive roots of the equations
\begin{equation} \label{Section1Eq.3}
\frac{r{}_{q}f_{p,b,c,\delta}'(r)}{_{q}f_{p,b,c,\delta}(r)}=\alpha, \text{ \ \ } \frac{r{}_{q}g_{p,b,c,\delta}'(r)}{_{q}g_{p,b,c,\delta}(r)}=\alpha \text{ \ and \ } \frac{r{}_{q}h_{p,b,c,\delta}'(r)}{_{q}h_{p,b,c,\delta}(r)}=\alpha.
\end{equation}
which are equivalent to
\[kr{}_{k}W_{\nu,c}'(r)-\nu \alpha {}_{k}W_{\nu,c}(r)=0, \text{ \ \ } r{}_{k}W_{\nu,c}'(r)-\left( \alpha+\frac{\nu}{k}-1\right) {}_{k}W_{\nu,c}(r)=0 \]
and
\[\sqrt{r}{}_{k}W_{\nu,c}'(\sqrt{r})-2\left( \alpha+\frac{\nu}{2k}-1\right) {}_{k}W_{\nu,c}(\sqrt{r})=0.\]

This completes the proof of the theorem.
\end{proof}
\begin{remark}
It is evident that our main results which are given in \Cref{MainTheorem1}, in particular for $c=1$ and $k=1,$ correspond to the results in \cite[Theorem 1]{BKS}.
\end{remark}
The following theorem provides some tight lower and upper bounds for the radii
of starlikeness of the functions considered in the above theorem. The technique used in determining the bounds for the radii of starlikeness of these functions is based on the study of \cite{ABO} and \cite{ABY}. The main idea in the proof of the next theorem is to determine some Euler-Rayleigh inequalities for the first positive zero of some entire functions, which are connected with the transcendental equations appeared in the above theorem. Of course, it is possible to give more tighter bounds in the next theorem by using higher order Euler-Rayleigh inequalities for $k\in\left\lbrace2,3,... \right\rbrace $, however we omitted them owing to their complicated form.
\begin{theorem} \label{MainTheorem2}
Let $k>0$, $c>0$ and $\nu>0$.
\begin{itemize}
	\item [\bf a.] The radius of starlikeness $r^{\star}({}_{k}f_{\nu,c})$ is satisfies the inequalities
	\[2\sqrt{\frac{\nu(\nu+k)}{c(\nu+2k)}}<r^{\star}({}_{k}f_{\nu,c})<2(\nu+2k)\sqrt{\frac{k(\nu+k)}{c\bigg((\nu+2k)^3-k\nu(\nu+k)(\nu+4k)\bigg)}}.\]
	
	\item [\bf b.] The radius of starlikeness $r^{\star}({}_{k}g_{\nu,c})$ is satisfies the inequalities
	\[2\sqrt{\frac{\nu+3}{3c}}<r^{\star}({}_{k}g_{\nu,c})<2\sqrt{\frac{3(\nu+k)(\nu+2k)}{c\left( 4\nu+13k\right) }}.\] 
	
	\item [\bf c.] The radius of starlikeness $r^{\star}({}_{k}h_{\nu,c})$ is satisfies the inequalities
	\[\frac{2(\nu+k)}{c}<r^{\star}({}_{k}h_{\nu,c})<\frac{8(\nu+k)(\nu+2k)}{c(\nu+5k)}.\]
	
\end{itemize}
\end{theorem}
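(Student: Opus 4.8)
The plan is to convert each transcendental equation produced by \Cref{MainTheorem1} at $\alpha=0$ into the problem of locating the first positive zero of an auxiliary entire function of a single variable, and then to trap that zero between two consecutive Euler--Rayleigh quotients. Throughout, I would write ${}_{k}W_{\nu,c}(z)=\left(z/2\right)^{\nu/k}\Lambda(z^2/4)$, where $\Lambda(u)=\sum_{n\geq 0}a_n u^n$ with $a_n=(-c)^n/\bigl(n!\,\Gamma_{k}(nk+\nu+k)\bigr)$. Since $c>0$, the coefficients $a_n$ alternate in sign, and by \Cref{kBesselLemma} the function $\Lambda$ has only positive zeros $u_n={}_{k}\omega_{\nu,c,n}^2/4$.

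First I would identify the three auxiliary functions. Starting from the logarithmic derivatives computed in the proof of \Cref{MainTheorem1} and setting $\alpha=0$, then clearing the factor $\left(z/2\right)^{\nu/k}$, the equation for ${}_{k}f_{\nu,c}$ becomes $\Psi_f(u)=0$ with $\Psi_f(u)=\tfrac{\nu}{k}\Lambda(u)+2u\Lambda'(u)=\sum_{n\geq 0}\tfrac{\nu+2nk}{k}\,a_n u^n$ and $u=z^2/4$; likewise ${}_{k}g_{\nu,c}$ gives $\Psi_g(u)=\Lambda(u)+2u\Lambda'(u)=\sum_{n\geq 0}(2n+1)a_n u^n$ with $u=z^2/4$, and ${}_{k}h_{\nu,c}$ gives $\Psi_h(u)=\Lambda(u)+u\Lambda'(u)=\sum_{n\geq 0}(n+1)a_n u^n$ with the \emph{linear} substitution $u=z/4$. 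Consequently, if $u_1$ denotes the first positive zero of the relevant auxiliary function, then $r^{\star}({}_{k}f_{\nu,c})=2\sqrt{u_1}$, $r^{\star}({}_{k}g_{\nu,c})=2\sqrt{u_1}$ and $r^{\star}({}_{k}h_{\nu,c})=4u_1$.

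Next I would justify that each auxiliary function has only positive real zeros, which is the crux of the argument and the step I expect to be the main obstacle. For $\Psi_f$ this is immediate, since its zeros are exactly $\tfrac14\,{}_{k}\omega_{\nu,c,n}^{\prime 2}$, which are real and positive by the interlacing part of \Cref{kBesselLemma}. For $\Psi_g$ and $\Psi_h$ the coefficient multipliers $2n+1$ and $n+1$ are of the form $n+\lambda$ with $\lambda>0$ and hence are multiplier sequences preserving the Laguerre--P\'olya class; applying them to the function $u\mapsto\Lambda(-u)$, which belongs to $\mathcal{LP}$ and has positive Maclaurin coefficients, again yields a function in $\mathcal{LP}$ with positive coefficients, so that $\Psi_g(-u)$ and $\Psi_h(-u)$ have only negative zeros. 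Equivalently $\Psi_g$ and $\Psi_h$ have only positive zeros. This places each auxiliary function in the setting where the Weierstrassian product $\beta_0\prod_{j\geq1}(1-u/u_j)$ is valid, which is precisely what the Euler--Rayleigh machinery requires.

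Finally I would run the Euler--Rayleigh estimation. For an entire function $\sum_{n\geq0}\beta_n u^n$ with $\beta_0\neq0$ and only positive zeros $u_1<u_2<\cdots$, set $\xi_m=\sum_{j\geq1}u_j^{-m}$; Newton's identities give $\xi_1=-\beta_1/\beta_0$ and $\xi_2=\beta_1^2/\beta_0^2-2\beta_2/\beta_0$, while the Euler--Rayleigh inequalities yield $\xi_1^{-1}<u_1<\xi_1/\xi_2$. It then remains to compute $\beta_0,\beta_1,\beta_2$ for each of $\Psi_f,\Psi_g,\Psi_h$ and to simplify using the recurrence $\Gamma_{k}(z+k)=z\Gamma_{k}(z)$, which collapses the ratios $\Gamma_{k}(\nu+k)/\Gamma_{k}(\nu+2k)=1/(\nu+k)$ and $\Gamma_{k}(\nu+k)/\Gamma_{k}(\nu+3k)=1/\bigl((\nu+k)(\nu+2k)\bigr)$. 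Substituting these into $\xi_1^{-1}$ and $\xi_1/\xi_2$ and translating through $r^{\star}=2\sqrt{u_1}$ (respectively $4u_1$) produces the stated lower and upper bounds; the only genuinely laborious part is the algebraic simplification of $\xi_1/\xi_2$ for ${}_{k}f_{\nu,c}$, where a cubic combination of $\nu$ and $k$ emerges in the denominator of the upper bound.
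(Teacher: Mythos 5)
Your proposal is correct and follows essentially the same route as the paper: reduce each case at $\alpha=0$ to the first positive zero of an auxiliary entire function (your $\Psi_f,\Psi_g,\Psi_h$ are, up to constants and the substitutions $u=z^2/4$ resp. $u=z/4$, the paper's ${}_{k}\Xi_{\nu,c}$, ${}_{k}\Lambda_{\nu,c}$ and ${}_{k}\Upsilon_{\nu,c}$), establish reality and positivity of its zeros via the Laguerre--P\'olya class and Laguerre's multiplier lemma, and then trap the first zero with the Euler--Rayleigh inequalities using $\xi_1$ and $\xi_2$ computed from the Maclaurin coefficients. The only cosmetic difference is that you treat part (a) through the same power-series substitution as the other two cases, while the paper works directly with the Hadamard product of ${}_{k}W_{\nu,c}'$; the resulting Rayleigh sums and bounds are identical.
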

\begin{proof}
\begin{itemize}
	\item [\bf a.] If we take $\alpha=0$ in \Cref{MainTheorem1}, then the radius of starlikeness of the normalized $k-$Bessel function ${}_{k}f_{\nu,c}$ corresponds to the radius of starlikeness of the function ${}_{k}\Xi_{\nu,c}(z)={}_{k}W_{\nu,c}'(z).$ The infinite series representation of the function ${}_{k}\Xi_{\nu,c}(z)$ and its derivative are as follows:
	\begin{equation}\label{BounsforStar1}
	{}_{k}\Xi_{\nu,c}(z)=\sum_{n\geq 0}\frac{(-c)^n (2n+\frac{\nu}{k})}{2^{2n+\frac{\nu}{k}}n!\Gamma_{k}(nk+\nu+k)}z^{2n+\frac{\nu}{k}-1}
	\end{equation}
	and
	\begin{equation}\label{BounsforStar2}
	{}_{k}\Xi_{\nu,c}'(z)=\sum_{n\geq 0}\frac{(-c)^n (2n+\frac{\nu}{k})(2n+\frac{\nu}{k}-1)}{2^{2n+\frac{\nu}{k}}n!\Gamma_{k}(nk+\nu+k)}z^{2n+\frac{\nu}{k}-2}.
	\end{equation}
	In light of \Cref{kBesselLemma} we deduce that the function $z\mapsto 	{}_{k}W_{\nu,c}(z)$ is of the Laguerre-P\'olya class $\mathcal{LP}.$ It is well known that this class of entire functions is closed under differentiation, and therefore  $z\mapsto {}_{k}\Xi_{\nu,c}(z)$ belongs also to the Laguerre-P\'olya class $\mathcal{LP}.$ Hence the zeros of the function ${}_{k}\Xi_{\nu,c}(z)$ are all real. Thus, ${}_{k}\Xi_{\nu,c}(z)$ can be represented as the inifinite product
	\begin{equation}\label{BoundsforStar3}
	{}_{k}\Xi_{\nu,c}(z)=\frac{\nu\left( \frac{z}{2}\right) ^{\frac{\nu}{k}-1}}{2k\Gamma_{k}(\nu+k)}\prod_{n\geq 1}\left(1-\frac{z^2}{{}_{k}\omega_{\nu,c,n}'^2}\right).
	\end{equation}
	Logarithmic differentiation of both sides of Eqn. \eqref{BoundsforStar3} for $\left| z\right|< {}_{k}\omega_{\nu,c,1}'$ yields 
	\begin{equation}\label{BoundsforStar4}
	\frac{z	{}_{k}\Xi_{\nu,c}'(z)}{	{}_{k}\Xi_{\nu,c}(z)}-\left( \frac{\nu}{k}-1\right) =-2\sum_{n \geq 1}\frac{z^2}{{}_{k}\omega_{\nu,c,n}'^2-z^2}=-2\sum_{n \geq 1}\sum_{m\geq0}\frac{z^{2m+2}}{{}_{k}\omega_{\nu,c,n}'^{2m+2}}=-2\sum_{m\geq0}\sigma_{m+1}z^{2m+2},
	\end{equation}
	where $\sigma_{m}=\sum_{n \geq 1}{}_k{}\omega_{\nu,c,n}'^{-2m}.$ On the other hand, with the aid of Eqns. \eqref{BounsforStar1} and \eqref{BounsforStar2} we find
	\begin{equation}\label{BoundsforStar5}
	\frac{z	{}_{k}\Xi_{\nu,c}'(z)}{	{}_{k}\Xi_{\nu,c}(z)}=\sum_{n\geq 0}a_n z^{2n} \bigg/ \sum_{n \geq 0}b_n z^{2n}
	\end{equation}
	where 
	\[ a_n= \frac{(-c)^n (2n+\frac{\nu}{k})(2n+\frac{\nu}{k}-1)}{2^{2n}n!\Gamma_{k}(nk+\nu+k)} \text{ \ and \ } b_n=\frac{(-c)^n (2n+\frac{\nu}{k})}{2^{2n}n!\Gamma_{k}(nk+\nu+k)}.\]
	By comparing the coefficients of Eqns. \eqref{BoundsforStar4} and \eqref{BoundsforStar5} we obtain
	\[ \sigma_{1}=\frac{c(\nu+2k)}{4\nu(\nu+k)} \text{ \ and \ } \sigma_{2}=\frac{c^2\left( (\nu+2k)^3-k\nu(\nu+k)(\nu+4k)\right) }{16k\nu(\nu+k)^2(\nu+2k)} .\]
	By using the Euler-Rayleigh inequalities
	\[ \sigma_{m}^{-\frac{1}{m}}<{}_{k}\omega_{\nu,c,1}'^{2}<\frac{ \sigma_{m}}{ \sigma_{m+1}}\]
	for $m=1$ we obtain the inequalities of the first part of the theorem.
	\item [\bf b.] From \cite{BKS} and \cite{BOS} we say that the radius of starlikeness of the function ${}_{k}g_{\nu,c}(z)$ is the first positive zero of its derivative. We can draw conclusion from \Cref{kBesselLemma} that the zeros
	\[ {}_{k}g_{\nu,c}(z)=\Gamma_{k}(\nu+k)\sum_{n \geq 0}\frac{(-c)^n}{n!2^{2n}\Gamma_{k}(nk+\nu+k)}z^{2n+1}  \]
	are all real for $k>0$, $c>0$ and $\nu>0.$ Consequently, this function belongs to the Laguerre-P\'olya class. Since the Laguerre-P\'olya class $\mathcal{LP}$ is closed under differentiation, we deduce that ${}_{k}g_{\nu,c}'(z)$ belongs also to the  Laguerre-P\'olya class and hence all of its zeros are real. Now, we consider the entire function
	\[{}_{k}\Lambda_{\nu,c}(z)={}_{k}g_{\nu,c}'(2\sqrt{z}) =\Gamma_{k}(\nu+k)\sum_{n \geq 0}\frac{(-c)^n (2n+1)}{n!\Gamma_{k}(nk+\nu+k)}z^{n}.\]
	We shall show that all the zeros of the function ${}_{k}\Lambda_{\nu,c}(z)$ are real and positive. For this we note that
	\[ {}_{k}\phi_{\nu,c}(z)=\frac{\Gamma_{k}(\nu+k)(2z+1)}{\Gamma_{k}(kz+\nu+k)} \]
	is entire function of growth order $1$ and this assume real values along the real axis and possess only negative zeros if $k>0$, $c>0$ and $\nu>0.$ Therefore in light of Laguerre's lemma stated in \cite[Lemma 1, pp. 2208]{BS18} we get that the entire function
	\[ {}_{k}\gamma_{\nu,c}(z)=\Gamma_{k}(\nu+k)\sum_{n \geq 0}\frac{ (2n+1)}{n!\Gamma_{k}(nk+\nu+k)}z^{n} \]
	also has only real and negative zeros. This means that for $k>0$, $c>0$ and $\nu>0$ ${}_{k}\gamma_{\nu,c}(-cz)$ has real and positive zeros. That is, ${}_{k}\Lambda_{\nu,c}(z)$ has real and positive zeros. Suppose that ${}_{k}\beta_{\nu,c,n}$'s are the zeros of the function ${}_{k}\Lambda_{\nu,c}(z).$ Thus, since the function $z\mapsto{}_{k}\Lambda_{\nu,c}(z)$ has growth order $\frac{1}{2}$ it can be represented by the infinite product
	\begin{equation}\label{BoundsforStar6}
	{}_{k}\Lambda_{\nu,c}(z)=\prod_{n\geq1}\left(1-\frac{z}{{}_{k}\beta_{\nu,c,n}} \right),
	\end{equation}
	where ${}_{k}\beta_{\nu,c,n}>0$ for each $n\in \mathbb{N}.$ 	Logarithmic differentiation of both sides of Eqn. \eqref{BoundsforStar6} yields
	\begin{align}\label{BoundsforStar7}
	\frac{{}_{k}\Lambda_{\nu,c}'(z)}{{}_{k}\Lambda_{\nu,c}(z)}&=-\sum_{n \geq 1}\frac{1}{{}_{k}\beta_{\nu,c,n}-z}=-\sum_{n \geq 1}\sum_{m\geq 0} \frac{z^m}{{}_{q}\beta_{\nu,c,n}^{m+1}} \nonumber \\
	&=-\sum_{m\geq 0}\sum_{n \geq 1} \frac{z^m}{{}_{q}\beta_{\nu,c,n}^{m+1}}=-\sum_{m\geq 0} \ell_{m+1}z^m, \text{ \ \ } \left| z\right| <{}_{k}\beta_{\nu,c,1},
	\end{align}
	where $ \ell_{m}=\sum_{n \geq 1}{}_{k}\beta_{\nu,c,n}^{-m}.$ Moreover, taking into account fact that
	\[{}_{k}\Lambda_{\nu,c}'(z)=\Gamma_{k}(\nu+k)\sum_{n \geq 0}\frac{(-c)^{n+1}(2n+3)}{n!\Gamma_{k}((n+1)k+\nu+k)}z^{n}\]
	we get 
	\begin{equation}\label{BoundsforStar8}
	\frac{{}_{k}\Lambda_{\nu,c}'(z)}{{}_{k}\Lambda_{\nu,c}(z)}=\sum_{n \geq 0}c_n z^n\bigg/ \sum_{n \geq 0}d_n z^n,
	\end{equation}
	where
	\[c_n=\frac{(-c)^{n+1}(2n+3)}{n!\Gamma_{k}((n+1)k+\nu+k)} \text{ \ and \ } d_n=\frac{(-c)^n (2n+1)}{n!\Gamma_{k}(nk+\nu+k)}.\]
	By comparing the coefficients of Eqns. \eqref{BoundsforStar7} and \eqref{BoundsforStar8} we arrive at
	\[\ell_1=\frac{3c}{\nu+k} \text{ \ and \ } \ell_2=\frac{c^2\left(  4\nu+13k\right)  }{(\nu+k)^2 (\nu+2k)} .\]
	By using the Euler-Rayleigh inequalities $\ell_{m}^{-\frac{1}{m}}<{}_{k}\beta_{\nu,c,1}<\frac{\ell_{m}}{\ell_{m+1}}$ we obtain the next inequalities for $2\sqrt{{}_{k}\beta_{\nu,c,1}},$ that is,
	\[2\sqrt{\frac{\nu+3}{3c}}<r^{\star}({}_{k}g_{\nu,c})<2\sqrt{\frac{3(\nu+k)(\nu+2k)}{c\left( 4\nu+13k\right) }}.\]
	\item [\bf c.] Consider the entire function
	\begin{equation}\label{BoundsforStar9}
	{}_{k}\Upsilon_{\nu,c}(z)={}_{k}h_{\nu,c}'(4z)=\Gamma_{k}(\nu+k)\sum_{n\geq 0}\frac{(-c)^n (n+1)}{ n!\Gamma_{k}(kn+\nu+k)}z^{n} .
	\end{equation}
	With the aid of \Cref{kBesselLemma}, it is possible to prove the reality of the zeros of the function ${}_{k}h_{\nu,c}(z).$ This means that ${}_{k}h_{\nu,c}(z)$ belongs to the Laguerre-P\'olya class $\mathcal{LP}$. Consequently, the function ${}_{k}h_{\nu,c}'(z)$ belongs also to the Laguerre-P\'olya class $\mathcal{LP}$ and has only real zeros. It is obvious that this is also valid for the function ${}_{k}\Upsilon_{\nu,c}(z).$ Moreover, by means of Laguerre's lemma stated in \cite[Lemma 1, pp. 2208]{BS18} we deduce that the function ${}_{k}\Upsilon_{\nu,c}(z)$ has only positive real zeros and has growth order $\frac{1}{2},$ and thus ${}_{k}\Upsilon_{\nu,c}(z)$ can be represented by the infinite product
	\begin{equation}\label{BoundsforStar10}
	{}_{k}\Upsilon_{\nu,c}(z)=\prod_{n\geq1}\left(1-\frac{z}{{}_{k}\varsigma_{\nu,c,n}} \right),
	\end{equation}
	where ${}_{k}\varsigma_{\nu,c,n}$'s are positive zeros of the function ${}_{k}\Upsilon_{\nu,c}(z).$ Logarithmic differentiation of both sides of Eqn. \eqref{BoundsforStar10} gives
	\begin{equation}\label{BoundsforStar11}
	\frac{{}_{k}\Upsilon_{\nu,c}'(z)}{{}_{k}\Upsilon_{\nu,c}(z)}=-\sum_{n \geq 1}\frac{1}{{}_{k}\varsigma_{\nu,c,n}-z} =-\sum_{n \geq 1}\sum_{m\geq 0}\frac{z^m}{{}_{k}\varsigma_{\nu,c,n}^{m+1}}=-\sum_{m\geq 0}\kappa_{m+1}z^m, \text{ \ \ }\left| z\right|<{}_{k}\varsigma_{\nu,c,1}
	\end{equation}
	where $\kappa_{m}=\sum_{n \geq 1}{}_{k}\varsigma_{\nu,c,n}^{-m}.$ On the other hand, with the aid of Eqn.\eqref{BoundsforStar9} we have
	\begin{equation}\label{BoundsforStar12}
	\frac{	{}_{k}\Upsilon_{\nu,c}'(z)}{	{}_{k}\Upsilon_{\nu,c}(z)}= \sum_{n\geq 0}\frac{(-c)^{n+1} (n+2)}{ n!\Gamma_{k}(k(n+1)+\nu+k)}z^{n} \bigg/ \sum_{n\geq 0}\frac{(-c)^n (n+1)}{ n!\Gamma_{k}(kn+\nu+k)}z^{n}.
	\end{equation}
	With the help of Eqns. \eqref{BoundsforStar11} and \eqref{BoundsforStar12} we can express the Euler-Rayleigh sums in terms of $k,$ $\nu,$ $c$ and by using the Euler-Rayleigh inequalities $\kappa_{m}^{-\frac{1}{m}}<{}_{k}\varsigma_{\nu,c,1}<\frac{\kappa_{m}}{\kappa_{m+1}}$ we obtain the inequalities for $4{}_{k}\varsigma_{\nu,c,1}$ for $k>0$, $\nu>0$, $c>0$ and $m\in \mathbb{N}$ 
	\[4\kappa_{m}^{-\frac{1}{m}}<r^{\star}({}_{k}h_{\nu,c})<4\frac{\kappa_{m}}{\kappa_{m+1}}.\]	
	Since
	\[\kappa_{1}=\frac{2c}{\nu+k}   \text{ \ and \ } \kappa_{2}= \frac{4c^2}{(\nu+k)^2}-\frac{3c^2}{(\nu+k)(\nu+2k)}\]
	in particular, for $m=1$ from the above Euler-Rayleigh inequalities we have the next inequality for $4{}_{k}\varsigma_{\nu,c,1}$, that is,
	\[\frac{2(\nu+k)}{c}<r^{\star}({}_{k}h_{\nu,c})<\frac{8(\nu+k)(\nu+2k)}{c(\nu+5k)}.\]
\end{itemize}
This completes the proof of the theorem.
\end{proof}
\begin{remark}
It is obvious that our main results which are presented in \Cref{MainTheorem2} when we take $c=1$ and $k=1,$ coincide with the inequalities in \cite[Thms. 1 and 2]{ABY}.
\end{remark}

\subsection{The radii of  convexity of order $\alpha$ of functions ${}_{k}f_{\nu,c}$, ${}_{k}g_{\nu,c}$, and ${}_{k}h_{\nu,c}$} In this section we aim to determine the radii of convexity of order $\alpha$ of the normalized generalized $k-$Bessel functions and to find tight lower and upper bounds for the radius of convexity of order zero of these functions with the help of Euler-Rayleigh inequalities.

\begin{theorem}\label{MainTheorem3}
Let $k>0$, $c>0,$ $\nu>0$ and $\alpha\in\left[ 0,1\right) $.
\begin{itemize}
	\item [\bf a.] The radius of convexity of order $\alpha$ of the function ${}_{k}f_{\nu,c}$ is the smallest root of the equation
	\[1+r\frac{{}_{k}W_{\nu,c}''(r)}{{}_{k}W_{\nu,c}'(r)}+\left(\frac{k}{\nu}-1\right)r\frac{{}_{k}W_{\nu,c}'(r)}{{}_{k}W_{\nu,c}(r)}=\alpha.\]
	
	\item [\bf b.] The radius of convexity of order $\alpha$ of the function ${}_{k}g_{\nu,c}$ is the smallest root of the equation
	\[1+r\frac{{}_{k}g_{\nu,c}''(r)}{{}_{k}g_{\nu,c}'(r)}=\alpha.\]
	
	\item [\bf c.] The radius of convexity of order $\alpha$ of the function ${}_{k}h_{\nu,c}$ is the smallest root of the equation
	\[1+r\frac{{}_{k}h_{\nu,c}''(r)}{{}_{k}h_{\nu,c}'(r)}=\alpha.\]
\end{itemize}
\end{theorem}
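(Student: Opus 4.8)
The plan is to follow the same structure as the proof of \Cref{MainTheorem1}, since the convexity condition $\real\left(1+\frac{zf''(z)}{f'(z)}\right)>\alpha$ can be reduced to a starlikeness-type condition applied to the derivative $f'$. The central tool is the observation that $1+\frac{zf''(z)}{f'(z)} = \frac{z(f')'(z)}{f'(z)}$, so that the quantity we must control is the logarithmic derivative of $zf'(z)$, or equivalently the image of $f'$ under the starlikeness operator. First I would compute $\frac{z\,{}_kf_{\nu,c}'(z)}{{}_kf_{\nu,c}(z)}$, $\frac{z\,{}_kg_{\nu,c}'(z)}{{}_kg_{\nu,c}(z)}$ and $\frac{z\,{}_kh_{\nu,c}'(z)}{{}_kh_{\nu,c}(z)}$ exactly as in the previous theorem, and then differentiate logarithmically once more to obtain the convexity expressions $1+\frac{z\,{}_kf_{\nu,c}''(z)}{{}_kf_{\nu,c}'(z)}$ and the analogous expressions for ${}_kg_{\nu,c}$ and ${}_kh_{\nu,c}$ in terms of $\frac{{}_kW_{\nu,c}''}{{}_kW_{\nu,c}'}$ and $\frac{{}_kW_{\nu,c}'}{{}_kW_{\nu,c}}$, which for part (a) gives precisely the left-hand side of the stated equation.

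The key structural input is the interlacing property from \Cref{kBesselLemma}, together with the fact that ${}_kW_{\nu,c}\in\mathcal{LP}$ is closed under differentiation, so ${}_kW_{\nu,c}'\in\mathcal{LP}$ as well. This lets me write a Weierstrassian product for ${}_kW_{\nu,c}'$ using its positive zeros ${}_k\omega_{\nu,c,n}'$ and thereby expand each convexity quantity as a constant minus a sum of terms of the form $\frac{2z^2}{{}_k\omega_{\nu,c,n}^2-z^2}$ (coming from the zeros of ${}_kW_{\nu,c}$) and $\frac{2z^2}{{}_k\omega_{\nu,c,n}'^2-z^2}$ (coming from the zeros of ${}_kW_{\nu,c}'$). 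For ${}_kg_{\nu,c}$ and ${}_kh_{\nu,c}$ the leading non-integer power factors out cleanly, so $1+\frac{z\,{}_kg_{\nu,c}''(z)}{{}_kg_{\nu,c}'(z)}$ reduces to the logarithmic derivative of ${}_kg_{\nu,c}'$, and the convexity radius becomes the first positive zero of the relevant transcendental equation.

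To pass from the real-line computation to a statement about $\real(\cdot)$ on the full disk I would reuse inequality \eqref{Section1Eq.2} from \cite{BKS}, namely $\frac{|z|}{\theta-|z|}\geq\real\left(\frac{z}{\theta-z}\right)$ for $\theta>|z|$, applied termwise to each factor $\frac{z}{{}_k\omega^2-z}$ in the Mittag-Leffler expansions. This yields, for $|z|<\min\{{}_k\omega_{\nu,c,1},{}_k\omega_{\nu,c,1}'\}$, a lower bound of the convexity real part by the corresponding real-valued expression evaluated at $r=|z|$, with equality exactly at $z=|z|=r$. The interlacing inequality ${}_k\omega_{\nu,c,1}'<{}_k\omega_{\nu,c,1}$ guarantees that the relevant smallest zero is ${}_k\omega_{\nu,c,1}'$, so the function stays convex precisely up to the first zero of the stated equation. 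Invoking the minimum principle for harmonic functions then identifies the radius of convexity with the smallest positive root of that equation, and shows the inequality fails in any larger disk.

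The main obstacle I anticipate is bookkeeping rather than conceptual: in part (a) the presence of the factor $\left(\frac{k}{\nu}-1\right)$ means the convexity expression mixes the two Mittag-Leffler sums (over ${}_k\omega_{\nu,c,n}$ and over ${}_k\omega_{\nu,c,n}'$) with opposite-signed weights, so the termwise application of \eqref{Section1Eq.2} must be carried out carefully to ensure each term contributes with the correct sign and that the combined expression is genuinely decreasing on the real axis up to the first relevant zero. One must verify that the left-hand side of the equation in part (a), viewed as a real function of $r$, decreases from $1$ through $\alpha$ monotonically before the first singularity, which is where the negativity of $\frac{d}{dz}\left(\frac{{}_kW_{\nu,c}'}{{}_kW_{\nu,c}}\right)$ established in \Cref{kBesselLemma} is essential; the analogous monotonicity for the ${}_kW_{\nu,c}'$-quotient follows from the same genus-zero argument applied to ${}_kW_{\nu,c}'$.
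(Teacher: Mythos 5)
Your overall strategy coincides with the paper's: expand $1+zf''(z)/f'(z)$ via the Weierstrass products of ${}_{k}W_{\nu,c}$ and ${}_{k}W_{\nu,c}'$, bound the real part from below by the value on the positive real axis, and finish with the minimum principle and a monotonicity argument; parts (b) and (c) go through exactly as you describe. But in part (a) you have correctly located the obstacle and then left it unresolved, and it is not mere bookkeeping. The expansion is
\[
1+z\frac{{}_{k}f_{\nu,c}''(z)}{{}_{k}f_{\nu,c}'(z)}=1-\left(\frac{k}{\nu}-1\right)\sum_{n\geq1}\frac{2z^2}{{}_{k}\omega_{\nu,c,n}^2-z^2}-\sum_{n\geq1}\frac{2z^2}{{}_{k}\omega_{\nu,c,n}'^2-z^2},
\]
and when $\nu>k$ the coefficient $-\left(\frac{k}{\nu}-1\right)=1-\frac{k}{\nu}$ is positive, so the first sum enters the expression with a plus sign and the termwise estimate $\real\left(\frac{z^2}{\omega^2-z^2}\right)\leq\frac{|z|^2}{\omega^2-|z|^2}$ from \eqref{Section1Eq.2} points the wrong way for those terms. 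No amount of care in applying \eqref{Section1Eq.2} term by term can fix this.

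The missing ingredient is the two-term inequality of \cite[Lemma 2.1]{BSz14}: for $a>b>|z|$ and $\alpha\in[0,1]$ one has
\[
\alpha\real\left(\frac{z}{a-z}\right)-\real\left(\frac{z}{b-z}\right)\geq\alpha\frac{|z|}{a-|z|}-\frac{|z|}{b-|z|}.
\]
The paper applies this to the $n$th terms of the two sums paired together, with $a={}_{k}\omega_{\nu,c,n}^2$, $b={}_{k}\omega_{\nu,c,n}'^2$ and $\alpha=1-\frac{k}{\nu}$; the hypothesis $a>b$ is precisely the interlacing ${}_{k}\omega_{\nu,c,n}'<{}_{k}\omega_{\nu,c,n}$ for every $n$ from \Cref{kBesselLemma}, not merely the statement about the first zeros that you invoke. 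The same pairing is needed a second time to show that $r\mapsto 1+r\,{}_{k}f_{\nu,c}''(r)/{}_{k}f_{\nu,c}'(r)$ is strictly decreasing on $\left(0,{}_{k}\omega_{\nu,c,1}'\right)$ when $\nu>k$, since its derivative is again a combination of two positive series with the weight $1-\frac{k}{\nu}<1$; your proposed appeal to the negativity of $\frac{d}{dz}\left(\frac{{}_{k}W_{\nu,c}'(z)}{{}_{k}W_{\nu,c}(z)}\right)$ does not by itself control that signed combination. With this lemma inserted, the rest of your outline matches the paper's proof.
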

\begin{proof}
\begin{itemize}
	\item [\bf a.]  It is easy to verify that
	\[1+z\frac{{}_{k}f_{\nu,c}''(z)}{{}_{k}f_{\nu,c}'(z)}=1+z\frac{{}_{k}W_{\nu,c}''(z)}{{}_{k}W_{\nu,c}'(z)}+\left(\frac{k}{\nu}-1\right)z\frac{{}_{k}W_{\nu,c}'(z)}{{}_{k}W_{\nu,c}(z)}.\]
	Let denote of  the $n$th positive roots of ${}_{k}W_{\nu,c}(z)$ and ${}_{k}W_{\nu,c}'(z)$ by ${}_{k}\omega_{\nu,c,n}$ and ${}_{k}\omega_{\nu,c,n}^{\prime}$, respectively. From Eqn. \eqref{BoundsforStar3} we have the following infinite product representation
	\begin{equation}\label{EqConvex1}
	 {}_{k}W_{\nu,c}'(z)=\frac{\nu\left( \frac{z}{2}\right) ^{\frac{\nu}{k}-1}}{2k\Gamma_{k}(\nu+k)}\prod_{n\geq 1}\left(1-\frac{z^2}{{}_{k}\omega_{\nu,c,n}'^2}\right).
	\end{equation}
	Then  Logarithmic differentiation of Eqn. \eqref{InfiniteProductkBessel} stated in \Cref{kBesselLemma} and Eqn. \eqref{EqConvex1} leads to
	\[1+z\frac{{}_{k}f_{\nu,c}''(z)}{{}_{k}f_{\nu,c}'(z)}=1-\left(\frac{k}{\nu}-1\right)\sum_{n\geq1}\frac{2z^2}{{}_{k}\omega_{\nu,c,n}^2-z^2}-\sum_{n \geq 1}\frac{2z^2}{{}_{k}\omega_{\nu,c,n}'^2-z^2}. \]
	We will prove the theorem in two steps. First suppose $k>\nu$ By using the inequality \eqref{Section1Eq.2} we have
	\begin{equation}\label{EqConvex2}
	\real\left(1+z\frac{{}_{k}f_{\nu,c}''(z)}{{}_{k}f_{\nu,c}'(z)}\right)\geq1-\left(\frac{k}{\nu}-1\right)\sum_{n\geq1}\frac{2r^2}{{}_{k}\omega_{\nu,c,n}^2-r^2}-\sum_{n \geq 1}\frac{2r^2}{{}_{k}\omega_{\nu,c,n}'^2-r^2},
	\end{equation}
	where $\left| z\right|=r. $ Moreover, in light of the following inequality stated in \cite[Lemma 2.1]{BSz14}
	\[\alpha\real\left(\frac{z}{a-z} \right)-\real\left(\frac{z}{b-z} \right)\geq \alpha\frac{\left| z\right| }{a-\left| z\right|}-\frac{\left| z\right|}{b-\left| z\right|},\]
	where $a>b>0,$ $\alpha \in\left[0,1 \right] ,$ $z\in \mathbb{C}$ such that $\left| z\right|<b,$ we obtain that Eqn. \eqref{EqConvex2} also valid when $k<\nu$ for all $\mathbb{D}_{{}_{k}\omega_{\nu,c,1}'}.$ Here we used that the zeros ${}_{k}\omega_{\nu,c,n}$ and ${}_{k}\omega_{\nu,c,n}'$ interlace according to \Cref{kBesselLemma}.  Now, the above deduced inequality implies for $r\in\left(0, {}_{k}\omega_{\nu,c,1}'\right)$
	\[\inf_{z\in\mathbb{D}_{r}}\left\lbrace\real\left(1+z\frac{{}_{k}f_{\nu,c}''(z)}{{}_{k}f_{\nu,c}'(z)}\right)\right\rbrace=1+r\frac{{}_{k}f_{\nu,c}''(r)}{{}_{k}f_{\nu,c}'(r)}.\]
	Now we deal with the function ${}_{k}\vartheta_{\nu,c}:\left(0, {}_{k}\omega_{\nu,c,1}' \right) \rightarrow \mathbb{R},$ defined by
	\[{}_{k}\vartheta_{\nu,c}(r)=1+r\frac{{}_{k}f_{\nu,c}''(r)}{{}_{k}f_{\nu,c}'(r)}.\]
	The function is strictly decreasing since
	\begin{align*}
	{}_{k}\vartheta_{\nu,c}'(r)&=-\left(\frac{k}{\nu}-1 \right)\sum_{n \geq 1}\frac{4r{}_{k}\omega_{\nu,c,n}^2}{\left({}_{k}\omega_{\nu,c,n}^2-r^2\right)^2}-\sum_{n\geq1}\frac{4r{}_{k}\omega_{\nu,c,n}'^2}{\left({}_{k}\omega_{\nu,c,n}'^2-r^2\right)^2}\\
	&<\sum_{n\geq1}\frac{4r{}_{k}\omega_{\nu,c,n}^2}{\left({}_{k}\omega_{\nu,c,n}^2-r^2\right)^2}-\sum_{n\geq1}\frac{4r{}_{k}\omega_{\nu,c,n}'^2}{\left({}_{k}\omega_{\nu,c,n}'^2-r^2\right)^2}<0
	\end{align*}
	for $ r\in\left(0, {}_{k}\omega_{\nu,c,1}'\right) ,$ where we used again the interlacing property of the zeros stated in \Cref{kBesselLemma}. Also, taking into consideration that $\lim_{r \searrow 0}{}_{k}\vartheta_{\nu,c}(r)=1-\alpha>0,$ $\lim_{r\nearrow {}_{k}\omega_{\nu,c,1}}{}_{k}\vartheta_{\nu,c}(r)=-\infty$ that means that for $z\in \mathbb{D}(0,r_1)$ we have
	\[\real\left(1+z\frac{{}_{k}f_{\nu,c}''(z)}{{}_{k}f_{\nu,c}'(z)} \right) >\alpha,\]
	if and only if $r_1$ is the unique root of
	\[1+z\frac{{}_{k}f_{\nu,c}''(z)}{{}_{k}f_{\nu,c}'(z)}=\alpha,\]
	situated in $\left(0, {}_{k}\omega_{\nu,c,1}' \right). $
	\item [\bf b.] We know that the function ${}_{k}g_{\nu,c}'$ belongs to Laguerre-P\'olya class $\mathcal{LP}$ and has only real zeros. Suppose that ${}_{k}\eta_{\nu,c,n}$'s are the real zeros of the function ${}_{k}g_{\nu,c}'.$ Thus since the function $z\mapsto {}_{k}g_{\nu,c}' $ has growth order $\rho({}_{k}g_{\nu,c}')=\frac{1}{2}$ it can be represented by the infinite product
	\[{}_{k}g_{\nu,c}'(z)=\prod_{n\geq1}\left(1-\frac{z^2}{{}_{k}\eta_{\nu,c,n}^{2}} \right). \]
	Now, taking logarithmic derivatives on both sides, we obtain
	\[1+z\frac{{}_{k}g_{\nu,c}''(z)}{{}_{k}g_{\nu,c}'(z)}=1-2\sum_{n \geq 1}\frac{z^2}{{}_{k}\eta_{\nu,c,n}^2-z^2}.\]
	Application of the inequality Eqn. \eqref{Section1Eq.2} implies that
	\[\real\left(1+z\frac{{}_{k}g_{\nu,c}''(z)}{{}_{k}g_{\nu,c}'(z)} \right) \geq 1- 2\sum_{n \geq 1}\frac{r^2}{{}_{k}\eta_{\nu,c,n}^{2}-r^2}, \]
	where $\left|z \right|=r.$ Thus, $r\in\left(0,{}_{k}\eta_{\nu,c,1} \right),$ we get
	\[ \inf_{z\in \mathbb{D}_{r}}\left\lbrace \real\left(1+z\frac{{}_{k}g_{\nu,c}''(z)}{{}_{k}g_{\nu,c}'(z)} \right)\right\rbrace=1- 2\sum_{n \geq 1}\frac{r^2}{{}_{k}\eta_{\nu,c,n}^{2}-r^{2}}=1+r\frac{{}_{k}g_{\nu,c}''(r)}{{}_{k}g_{\nu,c}'(r)}.\]
	The function ${}_{k}\phi_{\nu,c}:\left(0,{}_{k}\eta_{\nu,c,1} \right)\rightarrow \mathbb{R}$ defined by
	\[{}_{k}\phi_{\nu,c}(r)=1-\alpha+r\frac{{}_{k}g_{\nu,c}''(r)}{{}_{k}g_{\nu,c}'(r)} \]
	is strictly decreasing and take the limits \(\lim_{r\nearrow {}_{k}\eta_{\nu,c,1}}{}_{k}\phi_{\nu,c}(r)=-\infty\) and \(\lim_{r \searrow 0}{}_{k}\phi_{\nu,c}(r)=1-\alpha, \) which means that for $z\in \mathbb{D}_{r_2}$ we get
	\[\real\left(1+z\frac{{}_{k}g_{\nu,c}''(z)}{{}_{k}g_{\nu,c}'(z)} \right) >\alpha,\]
	if and only if $r_1$ is the unique root of
	\[1+z\frac{{}_{k}g_{\nu,c}''(z)}{{}_{k}g_{\nu,c}'(z)}=\alpha,\]
	situated in $\left(0, {}_{k}\eta_{\nu,c,1}\right). $
	\item [\bf c.]  We know that the function $z\mapsto{}_{k}h_{\nu,c}$ belongs to the Laguerre-P\'olya class $\mathcal{LP},$ and consequently we get the function $z\mapsto{}_{k}h_{\nu,c}'$ belongs also to the  Laguerre-P\'olya class $\mathcal{LP}.$ Hence the zeros of the function  $z\mapsto {}_{k}h_{\nu,c}'$ are all real. Moreover, in light of Laguerre's lemma stated in \cite{BS18}, we say that the function ${}_{k}h_{\nu,c}'$ has only positive real zeros.  Let ${}_{k}\theta_{\nu,c,n}$ be the $n$th positive zero of the function $z\mapsto {}_{k}h_{\nu,c}'.$ Since the function $z\mapsto {}_{k}h_{\nu,c}' $ has growth order $\rho({}_{k}h_{\nu,c}')=\frac{1}{2}$ the next infinite product representation is valid
	\begin{equation}
	 {}_{k}h_{\nu,c}'(z)=\prod_{n\geq 1}\left(1-\frac{z}{{}_{k}\theta_{\nu,c,n}} \right).
	\end{equation}
	Let $r \in (0, {}_{k}\theta_{\nu,c,1})$ be a fixed number. Because of the minimum principle for harmonic functions and inequality \eqref{Section1Eq.2} for $z\in\mathbb{D}_r$ we have 
	\[\real\left(1+z\frac{{}_{k}h_{\nu,c}''(z)}{{}_{k}h_{\nu,c}'(z)}\right)=1-\real\left(\sum_{n\geq1}\frac{z}{{}_{k}\theta_{\nu,c,n}-z}\right) \geq 1+r\frac{{}_{k}h_{\nu,c,}''(r)}{{}_{k}h_{\nu,c}'(r)}. \]
	Consequently, it follows that
	\[\inf_{z\in\mathbb{D}_{r}}\left\lbrace\real\left(1+z\frac{{}_{k}h_{\nu,c,}''(z)}{{}_{k}h_{\nu,c}'(z)}\right)\right\rbrace=1+r\frac{{}_{k}h_{\nu,c}''(r)}{{}_{k}h_{\nu,c}'(r)}. \]
	Now, let $r_3$ be the smallest positive root of the equation
	\begin{equation}\label{EqConvex3}
	1+r\frac{{}_{k}h_{\nu,c,}''(r)}{{}_{k}h_{\nu,c}'(r)}=\alpha.
	\end{equation}
	For $z\in\mathbb{D}_{r_3}$ we have
	\[\real\left(1+r\frac{{}_{k}h_{\nu,c}''(r)}{{}_{k}h_{\nu,c}'(r)}\right)>\alpha.\]
	Now, we need to show that equation \eqref{EqConvex3} has a unique root in $(0, {}_{k}\theta_{\nu,c,1}).$ The function ${}_{k}\Theta_{\nu,c,\delta}:(0,{}_{k}\theta_{\nu,c,1})\rightarrow \mathbb{R}$ defined by
	\[{}_{k}\Theta_{\nu,c}(r)=1-\alpha+r\frac{{}_{k}h_{\nu,c}''(r)}{{}_{k}h_{\nu,c}'(r)}\]
	is strictly decreasing and 
	\[\lim_{r\nearrow{}_{k}\theta_{\nu,c,1}}{}_{k}\Theta_{\nu,c}(r)=-\infty, \text{ \ \ } \lim_{r \searrow 0}{}_{k}\Theta_{\nu,c}(r)=1-\alpha.\]
	Consequently, the equation
	\[1+z\frac{{}_{k}h_{\nu,c}''(z)}{{}_{k}h_{\nu,c}'(z)}=\alpha\]
	has a unique root $r_3$ in $(0,{}_{k}\theta_{\nu,c,1}).$
\end{itemize}
This completes the proof of the theorem.
\end{proof}
\begin{remark}
It is clear that  our main results which are given in  \Cref{MainTheorem3} when we choose $c=1$ and $k=1,$ correspond to the results given in \cite[Thms. 1, 2, and 3]{BSz14}.
\end{remark}
Finally, we give some tight lower and upper bounds for the radii of convexity of the functions ${}_{k}g_{\nu,c}$ and ${}_{k}h_{\nu,c}$.
\begin{theorem}\label{MainTheorem4}
Let $k>0$, $c>0,$ $\nu>0.$
\begin{itemize}
	\item [\bf a.] The radius of convexity $r^{c}({}_{k}g_{\nu,c})$ of the function 
	\[z\mapsto{}_{k}g_{\nu,c}(z)=2^{\frac{\nu}{k}}\Gamma_{k}(\nu+k)z^{1-\frac{\nu}{k}}{}_{k}W_{\nu,c}(z),\]
	is the smallest root of the $\left(z{}_{k}g_{\nu,c}'\right)^{\prime}=0$ and satisfies the following inequality
	\[\frac{2}{3}\sqrt{\frac{\nu+k}{c}}<r^{c}({}_{k}g_{\nu,c})<6\sqrt{\frac{(\nu+k)(\nu+2k)}{c(57\nu+137k)}} .\]
	\item [\bf b.] The radius of convexity $r^{c}({}_{k}h_{\nu,c})$ of the function 
	\[z\mapsto{}_{k}h_{\nu,c}(z)=2^{\frac{\nu}{k}}\Gamma_{k}(\nu+k)z^{1-\frac{\nu}{2k}}{}_{k}W_{\nu,c}(\sqrt{z}),\]
	is the smallest root of the $\left(z{}_{k}h_{\nu,c}'\right)^{\prime}=0$ and satisfies the following inequality
	\[ \frac{\nu+k}{c}< r^{c}({}_{k}h_{\nu,c})<\frac{16(\nu+k)(\nu+2k)}{c(7\nu+23k)} .\]
\end{itemize}
\end{theorem}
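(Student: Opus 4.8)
The plan is to mirror the proof of \Cref{MainTheorem2}, turning each bound into a first-order Euler--Rayleigh estimate for the first positive zero of an auxiliary entire function. First I would record the reduction supplied by \Cref{MainTheorem3}: putting $\alpha=0$ in part (b) there, the defining equation $1+r\,{}_{k}g_{\nu,c}''(r)/{}_{k}g_{\nu,c}'(r)=0$ is equivalent to $\bigl(r\,{}_{k}g_{\nu,c}'(r)\bigr)'=0$, so $r^{c}({}_{k}g_{\nu,c})$ is the smallest positive root of $\bigl(z\,{}_{k}g_{\nu,c}'\bigr)'=0$; the identical manipulation of part (c) gives the corresponding statement for ${}_{k}h_{\nu,c}$. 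This establishes the root characterisations asserted in the theorem, and it remains to bound these roots.

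For part (a) I would start from ${}_{k}g_{\nu,c}(z)=\Gamma_{k}(\nu+k)\sum_{n\ge0}\frac{(-c)^n}{n!\,2^{2n}\Gamma_{k}(nk+\nu+k)}z^{2n+1}$ and compute $\bigl(z\,{}_{k}g_{\nu,c}'(z)\bigr)'=\sum_{n\ge0}(2n+1)^2 a_n z^{2n}$, which is even in $z$. The substitution $z=2\sqrt{w}$ clears the dyadic factors and yields the entire function $\Xi(w)=\sum_{n\ge0}(2n+1)^2\frac{\Gamma_{k}(\nu+k)(-c)^n}{n!\,\Gamma_{k}(nk+\nu+k)}w^n$. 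Since ${}_{k}g_{\nu,c}$ lies in $\mathcal{LP}$ (as shown in \Cref{MainTheorem2}) and $\mathcal{LP}$ is closed under differentiation, $\bigl(z\,{}_{k}g_{\nu,c}'\bigr)'$ has only real zeros; pairing this with Laguerre's lemma \cite{BS18} applied to the multiplier $(2z+1)^2\Gamma_{k}(\nu+k)/\Gamma_{k}(kz+\nu+k)$, exactly as in \Cref{MainTheorem2}(b), upgrades the zeros of $\Xi$ to real and positive ones, say ${}_{k}\xi_{\nu,c,n}$. Writing the Weierstrass product $\Xi(w)=\prod_{n\ge1}\bigl(1-w/{}_{k}\xi_{\nu,c,n}\bigr)$ and taking the logarithmic derivative, comparison of power-series coefficients reads off the Euler--Rayleigh sums $\sigma_m=\sum_{n\ge1}{}_{k}\xi_{\nu,c,n}^{-m}$. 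Using the recurrence $\Gamma_{k}(z+k)=z\Gamma_{k}(z)$ I would obtain $\sigma_1=9c/(\nu+k)$ and, via Newton's identity $\sigma_2=\sigma_1^2-2\,\Xi_{[2]}$ (with $\Xi_{[2]}$ the second Taylor coefficient), an explicit $\sigma_2$; the Euler--Rayleigh inequalities $\sigma_1^{-1}<{}_{k}\xi_{\nu,c,1}<\sigma_1/\sigma_2$ then give, after undoing $z=2\sqrt{w}$ so that $r^{c}({}_{k}g_{\nu,c})=2\sqrt{{}_{k}\xi_{\nu,c,1}}$, the stated two-sided bound.

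Part (b) runs entirely parallel: from ${}_{k}h_{\nu,c}(z)=\Gamma_{k}(\nu+k)\sum_{n\ge0}\frac{(-c)^n}{n!\,2^{2n}\Gamma_{k}(nk+\nu+k)}z^{n+1}$ one has $\bigl(z\,{}_{k}h_{\nu,c}'(z)\bigr)'=\sum_{n\ge0}(n+1)^2 b_n z^{n}$, and the substitution $z=4w$ produces $\Phi(w)=\sum_{n\ge0}(n+1)^2\frac{\Gamma_{k}(\nu+k)(-c)^n}{n!\,\Gamma_{k}(nk+\nu+k)}w^n$. The same $\mathcal{LP}$-plus-Laguerre's-lemma argument makes the zeros ${}_{k}\zeta_{\nu,c,n}$ of $\Phi$ real and positive, and the coefficient comparison yields $\tau_1=4c/(\nu+k)$ together with $\tau_2=\tau_1^2-2\,\Phi_{[2]}=c^2(7\nu+23k)/\bigl((\nu+k)^2(\nu+2k)\bigr)$. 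Feeding these into $\tau_1^{-1}<{}_{k}\zeta_{\nu,c,1}<\tau_1/\tau_2$ and using $r^{c}({}_{k}h_{\nu,c})=4\,{}_{k}\zeta_{\nu,c,1}$ delivers the bounds $(\nu+k)/c<r^{c}({}_{k}h_{\nu,c})<16(\nu+k)(\nu+2k)/\bigl(c(7\nu+23k)\bigr)$.

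The step I expect to be the main obstacle is not the final Euler--Rayleigh arithmetic but the verification that $\Xi$ and $\Phi$ have \emph{only} real and positive zeros. The reality follows cleanly from closure of $\mathcal{LP}$ under differentiation, but the positivity requires packaging $\bigl(z\,{}_{k}g_{\nu,c}'\bigr)'$ and $\bigl(z\,{}_{k}h_{\nu,c}'\bigr)'$ so that Laguerre's lemma applies to the associated gamma-quotient multiplier, precisely as in \Cref{MainTheorem2}; once that is in place the remaining work is the routine bookkeeping of Newton's identity and the $k$-gamma recurrence.
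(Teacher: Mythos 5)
Your proposal is correct and follows essentially the same route as the paper: reduce each radius of convexity to the first positive zero of $\left(z\,{}_{k}g_{\nu,c}'\right)^{\prime}$ resp.\ $\left(z\,{}_{k}h_{\nu,c}'\right)^{\prime}$, establish reality and positivity of the zeros of the rescaled auxiliary functions via closure of $\mathcal{LP}$ under differentiation plus Laguerre's lemma, and then apply the first-order Euler--Rayleigh inequalities to the coefficient data; your computed sums agree with the paper's $\mu_1,\mu_2,\upsilon_1,\upsilon_2$. The only cosmetic difference is that you obtain the root characterization from \Cref{MainTheorem3} with $\alpha=0$ (using $\left(r g'\right)'=g'\left(1+r g''/g'\right)$), whereas the paper invokes Alexander's duality together with the starlikeness results of \cite{BKS,BOS}; both are valid and lead to the same equations.
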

\begin{proof}
\begin{itemize}
	\item [\bf a.] In order to prove our main result we will make use of the Alexander’s duality theorem which has a very simple proof based on the characterization of starlike and convex functions in the unit disc. By means of this theorem one can deduce that the function ${}_{k}g_{\nu,c}(z)$ is convex if and only if $z\mapsto \left( z{}_{k}g_{\nu,c}\right)^{\prime}$ is starlike. From the studies in \cite{BKS,BOS} we know that the smallest positive zero of $z\mapsto\left(z{}_{k}g_{\nu,c}'\right)^{\prime}$ is the radius of starlikeness of $z{}_{k}g_{\nu,c}'(z).$ That is why the radius of convexity $r^{c}({}_{k}g_{\nu,c})$ is the smallest positive root of the equation $\left(z{}_{k}g_{\nu,c}'\right)^{\prime}=0.$ Now, by using  Eqns. \eqref{kBessel1} and \eqref{BounsforStar1}, which are the infinite series representations of the generalized $k-$Bessel function and its derivative, we have
	\[\left(z{}_{k}g_{\nu,c}'(z) \right)'=\Gamma_{k}(\nu+k)\sum_{n\geq 0}\frac{(-c)^{n}(2n+1)^{2}}{2^{2n}n!\Gamma_{k}(kn+\nu+k)}z^{2n}.\]
	Moreover, if we take $2\sqrt{z}$ instead of $z$ in the above equality, it is obvious that
	\begin{equation}\label{EqConvex4}
	{}_{k}\Delta_{\nu,c}(z)=\Gamma_{k}(\nu+k)\sum_{n\geq0}\frac{(-c)^{n}(2n+1)^{2}}{n!\Gamma_{k}(kn+\nu+k)}z^{n}.
	\end{equation}
	Taking into account facts that the function ${}_{k}g_{\nu,c}$ belongs to the Laguerre-P\'olya class of entire functions and that the class $\mathcal{LP}$ is closed under differentiation, it is easy to deduce that the function ${}_{k}\Delta_{\nu,c}$ belongs also to the Laguerre-P\'olya class. As a result, the function ${}_{k}\Delta_{\nu,c}$ is an entire function that has only real zeros. Furthermore, with the help of the Laguerre's theorem stated in \cite[Lemma 1 p.p 2208]{BS18} we deduce that the function ${}_{k}\Delta_{\nu,c}$ has real and positive zeros.  Suppose that ${}_{k}\varrho_{\nu,c,n}$'s are the positive zeros of the function ${}_{k}\Delta_{\nu,c}.$ Then the function ${}_{k}\Delta_{\nu,c}$ has the infinite product representation as follows:
	\begin{equation}\label{EqConvex5}
	{}_{k}\Delta_{\nu,c}(z)=\prod_{n\geq1}\left(1-\frac{z}{{}_{k}\varrho_{\nu,c,n}}\right). 
	\end{equation}
	Now, taking logarithmic derivatives on both sides, we arrive at
	\begin{equation}\label{EqConvex6}
	\frac{{}_{k}\Delta_{\nu,c}'(z)}{{}_{k}\Delta_{\nu,c}(z)}=-\sum_{n \geq 1}\frac{1}{{}_{k}\varrho_{\nu,c,n}-z}=-\sum_{m\geq 0}\mu_{m+1}z^m, \text{ \ \ } \left|z \right| < {}_{k}\varrho_{\nu,c,1},
	\end{equation}
	where $\mu_{m}=\sum_{n \geq 1}{}_{k}\varrho_{\nu,c,n}^{-m}.$ On the other hand, by using Eqn. \eqref{EqConvex4}, we get
	\begin{equation}\label{EqConvex7}
	\frac{{}_{k}\Delta_{\nu,c}'(z)}{{}_{k}\Delta_{\nu,c}(z)}=\sum_{n \geq 0}\frac{(-c)^{n+1}(2n+3)^2}{n!\Gamma_{k}(k(n+1)+\nu+k)}z^n \bigg/ \sum_{n \geq 0}\frac{(-c)^n (2n+1)^2}{n!\Gamma_{k}(kn+\nu+k)}z^n.
	\end{equation}
	By comparing the coefficients of Eqns. \eqref{EqConvex6} and \eqref{EqConvex7} we obtain
	\[\mu_{1}=\frac{9c}{\nu+k} \text{ \ and\ } \mu_{2}=\frac{81c^2}{(\nu
	+k)^2}-\frac{25c^2}{(\nu+k)(\nu+2k)} \]
	and by considering the Euler-Rayleigh inequalities \(\mu_{m}^{-\frac{1}{m}}<{}_{k}\varrho_{\nu,c,1}<\frac{\mu_{m}}{\mu_{m+1}} \) we have the inequalities for \(2\sqrt{{}_{k}\varrho_{\nu,c,1}} \)
	\[\frac{2}{3}\sqrt{\frac{\nu+k}{c}}<r^{c}({}_{k}g_{\nu,c})<6\sqrt{\frac{(\nu+k)(\nu+2k)}{c(57\nu+137k)}}. \]
	\item [\bf b.] In light of explanations which are presented in the proof of the first part of the theorem one can deduce that the radius of convexity $r^{c}({}_{k}h_{\nu,c})$ is the smallest positive root of the equation $\left(z{}_{k}h_{\nu,c}'(z)\right)^{\prime}=0.$ Upon some simple calculation, we obtain
	\begin{equation}\label{EqConvex8}
	{}_{k}\lambda_{\nu,c}(z)=\left(z{}_{k}h_{\nu,c}'(z)\right)^{\prime}=1+\Gamma_{k}(\nu+k)\sum_{n\geq1}\frac{(-c)^{n}(n+1)^{2}}{2^{2n}n!\Gamma_{k}(kn+\nu+k)}z^{n}.
	\end{equation}
	With the aid of facts that the function ${}_{k}h_{\nu,c}$ belongs to the Laguerre-P\'olya class of entire functions $\mathcal{LP}$ and that the class  $\mathcal{LP}$ is closed under differentiation, we say that the function ${}_{k}\lambda_{\nu,c}$ is also in the Laguerre-P\'olya class. This means that the zeros of the function ${}_{k}\lambda_{\nu,c}$ are all real. Now, we shall show the positivity of the zeros of ${}_{k}\lambda_{\nu,c}.$ For this we note that
	\[{}_{k}a_{\nu,c}(4z)=\frac{1}{\Gamma_{k}(kz+\nu+k)} \] 
	which is entire function of growth order $1$ and it assumes real values along the real axis and possess only negative zeros if $\nu>0$ and $k>0.$ Therefore in light of Laguerre's Lemma stated in \cite{BS18} we obtain that the entire function
	\[{}_{k}u_{\nu,c}(z)=\sum_{n \geq 0}\frac{z^n}{n!\Gamma_{k}(kn+\nu+k)} \]
	also has real and negative zeros. Hence ${}_{k}u_{\nu,c}(-cz)$ has real and positive zeros. That is, ${}_{k}\lambda_{\nu,c}(z)$ has real and positive zeros. Suppose that ${}_{k}\tau_{\nu,c,n}$'s are the positive zeros of the function ${}_{k}\lambda_{\nu,c}.$ Then the infinite product representation of the function ${}_{k}\lambda_{\nu,c}(z)$ can be stated as
	\begin{equation}\label{EqConvex9}
	{}_{k}\lambda_{\nu,c}(z)=\prod_{n\geq1}\left(1-\frac{z}{{}_{k}\tau_{\nu,c,n}} \right).
	\end{equation} 
	Now, taking the logarithmic derivatives on both sides of Eqn. \eqref{EqConvex9}, we have 
	\begin{equation}\label{EqConvex10}
	\frac{{}_{k}\lambda_{\nu,c}'(z)}{{}_{k}\lambda_{\nu,c}(z)}=-\sum_{m\geq 0}\upsilon_{m+1}z^{m}, \text{ \ \ } \left|z \right|<{}_{k}\tau_{\nu,c,1},
	\end{equation}
	where $\upsilon_{m}=\sum_{n \geq 1} {}_{k}\tau_{\nu,c,n}^{-m}.$ Also, by making use of Eqn. \eqref{EqConvex8} and its derivative, we get
	\begin{equation}\label{EqConvex11}
	\frac{{}_{k}\lambda_{\nu,c}'(z)}{{}_{k}\lambda_{\nu,c}(z)}=\sum_{n \geq 0}\frac{(-c)^{n+1}(n+2)^2}{2^{2n+2}n!\Gamma_{k}(k(n+1)+\nu+k)}z^n\bigg/ \sum_{n \geq 0}\frac{(-c)^n(n+1)^2}{2^{2n}n!\Gamma_{k}(kn+\nu+k)}z^n
	\end{equation}
	By comparing the coefficients of Eqns. \eqref{EqConvex10} and \eqref{EqConvex11} we arrive at
	\[\upsilon_{1}=\frac{c}{\nu+k} \text{ \ and\ } \upsilon_{2}=\frac{c^2\left(7\nu+23k \right) }{16(\nu+k)(\nu+2k)}.\]
	By considering the Euler-Rayleigh inequalities $\upsilon_{m}^{-\frac{1}{m}}<{}_{k}\tau_{\nu,c,n}<\frac{\upsilon_{m}}{\upsilon_{m+1}}$ for $m=1$ we have
	\[\frac{\nu+k}{c}<r^{c}({}_{k}h_{\nu,c})<\frac{16(\nu+k)(\nu+2k)}{c(7\nu+23k)} .\]
\end{itemize}
This completes the proof of the theorem.
\end{proof}
\begin{remark}
It is clear that  our main results which are given in \Cref{MainTheorem4} when we choose $c=1$ and $k=1,$ coincide with the results given in \cite[Thms. 6 and 7]{ABO}.
\end{remark}

\end{document}